\newcommand*{\da@rightarrow}{\mathchar"0\hexnumber@\symAMSa 4B }
\newcommand*{\da@leftarrow}{\mathchar"0\hexnumber@\symAMSa 4C }
\newcommand*{\xdashrightarrow}[2][]{%
  \mathrel{%
    \mathpalette{\da@xarrow{#1}{#2}{}\da@rightarrow{\,}{}}{}%
  }%
}
\newcommand{\xdashleftarrow}[2][]{%
  \mathrel{%
    \mathpalette{\da@xarrow{#1}{#2}\da@leftarrow{}{}{\,}}{}%
  }%
}
\newcommand*{\da@xarrow}[7]{%
  \sbox0{$\ifx#7\scriptstyle\scriptscriptstyle\else\scriptstyle\fi#5#1#6\m@th$}%
  \sbox2{$\ifx#7\scriptstyle\scriptscriptstyle\else\scriptstyle\fi#5#2#6\m@th$}%
  \sbox4{$#7\dabar@\m@th$}%
  \dimen@=\wd0 %
  \ifdim\wd2 >\dimen@
    \dimen@=\wd2 %
  \fi
  \count@=2 %
  \def\da@bars{\dabar@\dabar@}%
  \@whiledim\count@\wd4<\dimen@\do{%
    \advance\count@\@ne
    \expandafter\def\expandafter\da@bars\expandafter{%
      \da@bars
      \dabar@ 
    }%
  }%
  \mathrel{#3}%
  \mathrel{%
    \mathop{\da@bars}\limits
    \ifx\\#1\\%
    \else
      _{\copy0}%
    \fi
    \ifx\\#2\\%
    \else
      ^{\copy2}%
    \fi
  }%
  \mathrel{#4}%
}
\theoremstyle{plain}
\newtheorem{theorem}{Theorem}[section]
\newtheorem{proposition}[theorem]{Proposition}
\newtheorem{lemma}[theorem]{Lemma}
\newtheorem{example}[theorem]{Example}
\theoremstyle{definition}
\newtheorem{assumption}[theorem]{Assumption}
\newtheorem{remark}[theorem]{Remark}
\def\1{\bm{1}}
\def\vzero{{\bm{0}}}
\def\vone{{\bm{1}}}
\def\va{{\bm{a}}}
\def\vb{{\bm{b}}}
\def\vc{{\bm{c}}}
\def\vd{{\bm{d}}}
\def\ve{{\bm{e}}}
\def\vp{{\bm{p}}}
\def\vs{{\bm{s}}}
\def\vu{{\bm{u}}}
\def\vv{{\bm{v}}}
\def\vw{{\bm{w}}}
\def\vx{{\bm{x}}}
\def\vy{{\bm{y}}}
\def\vz{{\bm{z}}}
\def\evb{{b}}
\def\evc{{c}}
\def\evd{{d}}
\def\evp{{p}}
\def\evq{{q}}
\def\evx{{x}}
\def\evy{{y}}
\def\mA{{\bm{A}}}
\def\mD{{\bm{D}}}
\def\mE{{\bm{E}}}
\def\mF{{\bm{F}}}
\def\mH{{\bm{H}}}
\def\mI{{\bm{I}}}
\def\mJ{{\bm{J}}}
\def\mLambda{{\bm{\Lambda}}}
\def\mSigma{{\bm{\Sigma}}}
\DeclareMathAlphabet{\mathsfit}{\encodingdefault}{\sfdefault}{m}{sl}
\SetMathAlphabet{\mathsfit}{bold}{\encodingdefault}{\sfdefault}{bx}{n}
\def\gG{{\mathcal{G}}}
\def\sA{{\mathbb{A}}}
\def\sK{{\mathbb{K}}}
\def\sN{{\mathbb{N}}}
\def\sR{{\mathbb{R}}}
\def\sW{{\mathbb{W}}}
\def\sX{{\mathbb{X}}}
\def\sY{{\mathbb{Y}}}
\def\sZ{{\mathbb{Z}}}
\def\emLambda{{\Lambda}}
\def\emSigma{{\Sigma}}
\DeclareMathOperator*{\argmax}{arg\,max}
\DeclareMathOperator*{\argmin}{arg\,min}
\newcommand*{\T}{\mathsf{T}}
\DeclareMathOperator\diam{diam}
\newcommand{\interior}[1]{%
  {\kern0pt#1}^{\mathrm{o}}%
}
\newcommand\extrafootertext[1]{%
    \bgroup
    \renewcommand\thefootnote{\fnsymbol{footnote}}%
    \renewcommand\thempfootnote{\fnsymbol{mpfootnote}}%
    \footnotetext[0]{#1}%
    \egroup
}
\def\@listi{\leftmargin\leftmargini}
\def\@listii{\leftmargin\leftmarginii
   \labelwidth\leftmarginii\advance\labelwidth-\labelsep
   \topsep 2pt plus 1pt minus 0.5pt
   \parsep 1pt plus 0.5pt minus 0.5pt
   \itemsep \parsep}
\def\@listiii{\leftmargin\leftmarginiii
    \labelwidth\leftmarginiii\advance\labelwidth-\labelsep
    \topsep 1pt plus 0.5pt minus 0.5pt
    \parsep \z@ \partopsep 0.5pt plus 0pt minus 0.5pt
    \itemsep \topsep}
\def\@listiv{\leftmargin\leftmarginiv
     \labelwidth\leftmarginiv\advance\labelwidth-\labelsep}
\def\@listv{\leftmargin\leftmarginv
     \labelwidth\leftmarginv\advance\labelwidth-\labelsep}
\def\@listvi{\leftmargin\leftmarginvi
     \labelwidth\leftmarginvi\advance\labelwidth-\labelsep}
\title{\textbf{\Large{
Achieving Hierarchy-Free Approximation for Bilevel Programs With Equilibrium Constraints}}}
\author[1]{\small Jiayang Li}
\author[1]{\small Jing Yu}
\author[2]{\small Boyi Liu}
\author[2]{\small Zhaoran Wang}
\author[1]{\small Yu (Marco) Nie\footnote{Corresponding author; \texttt{y-nie@northwestern.edu}.}}
\affil[1]{\small \textit{Department of Civil and Environmental Engineering, Northwestern University}}
\affil[2]{\small \textit{Department of Industrial Engineering and Management Science, Northwestern University}}
\date{}
\begin{document}

\maketitle

\begin{abstract}
In this paper, we develop an approximation scheme for solving bilevel programs with equilibrium constraints, which are generally difficult to solve. Among other things, calculating the first-order derivative in such a problem requires differentiation across the hierarchy, which is computationally intensive, if not prohibitive.  To bypass the hierarchy, we propose to bound such bilevel programs, equivalent to multiple-followers Stackelberg games, with two new hierarchy-free problems: a $T$-step Cournot game and a $T$-step monopoly model. Since they are standard equilibrium or optimization problems, both can be efficiently solved via first-order methods. Importantly, we show that the bounds provided by these problems --- the upper bound by the $T$-step Cournot game and the lower bound by the $T$-step monopoly model --- can be made arbitrarily tight by increasing the step parameter $T$ for a wide range of problems. We prove that a small $T$ usually suffices under appropriate conditions to reach an approximation acceptable for most practical purposes. Eventually, the analytical insights are highlighted through numerical examples.
\end{abstract}

\section{Introduction}

Many bilevel optimization problems arising from real-world applications can be cast as a mathematical program whose feasible region is defined by an equilibrium problem \citep{luo1996mathematical, outrata1998nonsmooth}. A typical example is a Stackelberg game concerning a leader who aims to induce a desirable outcome in an economic or social system comprised of many self-interested followers, who can be seen as playing a non-cooperative game that converges to a Nash equilibrium \citep{dafermos1973toll, requate1993pollution, marcotte1992efficient, labbe1998bilevel, ehtamo2002recent}. More recently, motivated by such applications, developing efficient algorithms for solving bilevel programs with equilibrium constraints has also emerged as an essential topic in machine learning \citep{mguni2019coordinating, zheng2020ai, 2022liuinducing, maheshwari2022inducing}. 

In the optimization literature, a  bilevel program with equilibrium constraints is often written as \citep{luo1996mathematical}
\begin{equation}
\begin{split}
        \min_{\vx \in \sX, \ \vy^* \in \sY}~~&l(\vx, \vy^*),  \\
        \text{s.t.}~~&\langle f(\vx, \vy^*), \vy - \vy^* \rangle \geq 0, \quad \forall \vy \in \sY,
        \label{eq:bilevel}
\end{split}
\end{equation}
where $\sX \subseteq \sR^{m}$ and $\sY \subseteq \sR^{n}$ are two convex set; $l: \sX \times \sY \to \mathbb R$ and $f: \sX \times \sY \to \sR^{n}$ are two continuously differentiable functions. The lower-level problem in Problem \eqref{eq:bilevel} is a variational inequality (VI) problem, which is a general formulation for many equilibrium problems \citep{scutari2010convex, nagurney2013network, parise2019variational}. 
Problem \eqref{eq:bilevel} is well known for its intractability. Indeed,  it is NP-hard even when the upper-level objective function is linear, and the lower-level VI can be reduced to a linear program (LP) \citep{ben1990computational}, leading to a so-called bilevel linear program  \citep{candler1982linear,bard1982explicit}.

When the lower level is not an LP, Problem \eqref{eq:bilevel}  is usually solved via first-order methods that strive to find good local solutions \citep{colson2007overview}. Classical algorithms in this category include basic gradient descent \citep{friesz1990sensitivity}, steepest descent built on quadratic approximation \citep{luo1996mathematical}, and the penalty method \citep{aiyoshi1984solution}.
Applying a gradient descent method requires differentiation through the lower-level equilibrium problem, which is a challenging computational task. In the literature, it is often accomplished via \textit{implicit differentiation} (ID), which requires inverting a matrix whose size scales quadratically with the dimension of the lower-level VI problem \citep{tobin1986sensitivity, dafermos1988sensitivity, parise2019variational}. In large-scale problems, even storing such a matrix may be impractical, let alone inverting them.

The recent advance in machine learning (ML) has inspired a new class of algorithms for solving bilevel programs based on \textit{automatic differentiation} (AD) \citep{griewank1989automatic}. In AD-based methods, the gradient of a bilevel program is computed in two phases \citep{franceschi2017forward, franceschi2018bilevel}. In the first phase, the lower-level problem is first solved, while the computation process, along with intermediate results, is stored in a computational graph. In the second phase, the gradient of the lower-level solution is evaluated by \emph{unrolling} the computational graph. In the literature, AD-based methods were originally proposed for ML applications, e.g., hyperparameter optimization \citep{maclaurin2015gradient} and neural architecture search \citep{liu2018darts}, which can usually be formulated as a bilevel program whose lower level is an unconstrained optimization problem. More recently, they were also extended to handle those with equilibrium constraints \citep{li2020end}. Although AD-based methods bypass implicit differentiation, they may run into another challenge: since the computational graph grows with the number of iterations required to solve the lower-level equilibrium problem, it may become too deep to unroll efficiently even with AD \citep{li2022differentiable} when solving the lower-level problem requires too many iterations.

In a nutshell, finding the gradient for Problem \eqref{eq:bilevel} remains a potential obstacle to large-scale applications, whether ID or AD is used. These difficulties have motivated many work to accelerate ID \citep{hong2020two, chen2021single, liao2018reviving, grazzi2020iteration, vicol2021implicit, fung2022jfb, 2022liuinducing} or approximate AD \citep{luketina2016scalable, metz2016unrolled, finn2017model, liu2018darts, shaban2019truncated, ablin2020super, yang2021provably, li2022fully}. But fundamentally, the difficulty is inherent in the \textit{hierarchy} of the problem, or the fact that to obtain the gradient, one is obligated to solve and differentiate through the lower-level problem, which is computationally demanding in many cases. Our work is prompted by the following question: \textit{is it possible to free the process from that obligation, or to ``bypass the hierarchy"?}

We believe a hierarchy-free method is possible. Our inspiration comes from the duopoly model in economics, which concerns two firms, A and B, selling a homogeneous product in the same market. The duopoly can be organized in three ways \citep{shapiro1989theories}. (1) \textit{Stackelberg duopoly}: Firm A sets its output first, according to which Firm B makes the decision, giving rise to a typical bilevel program. (2) \textit{Cournot duopoly}: Firms A and B simultaneously optimize their own output, which results in a Nash equilibrium problem. (3) \textit{Monopoly}: Firm A becomes the only producer by taking over Firm B's business and setting the total output for both. In economics, it is well known that Firm A's optimal profit in the Stackelberg duopoly is lower than that in a monopoly but higher than that in a Cournot duopoly.

As Problem \eqref{eq:bilevel} can be 
interpreted as a Stackelberg game in which the leader and the followers control the upper- and lower-level decision variables, respectively, we reason that it may be bounded in a similar way as the Stackelberg duopoly is bounded. Specifically, instead of directly solving Problem \eqref{eq:bilevel}, we may first solve the corresponding ``Cournot game" and ``monopoly model" --- both of which are single-level problems --- to obtain lower and upper bounds. If the two bounds are close enough, we may accept the feasible one as an approximate solution.
The caveat, of course, is that the natural gap between these two models may be unacceptably large for practical purposes. Thus, the focus of this investigation is to narrow down this gap.

\textbf{Our contribution.}
In this paper, we view Problem \eqref{eq:bilevel} as a Stackelberg game and develop a new Cournot game and a new monopoly model that can provide arbitrarily tight upper and lower bounds for Problem \eqref{eq:bilevel}. The development of both models assumes the lower-level equilibrium state is the outcome of a dynamical process through which the followers improve their decisions \textit{step-by-step} towards optimality \citep{weibull1997evolutionary}. The two proposed models are defined as follows: in a $T$-step Cournot game, the leader and followers make decisions simultaneously, but the leader \textit{anticipates} the followers' decisions by $T$ steps, while in a $T$-step monopoly model, the leader has full \textit{control} over the followers but allows them to move on their dynamical process toward equilibrium by $T$ steps after the leader first dictates their decision.

Our contributions are threefold. (1) We show that both models can be efficiently solved via first-order methods, and the computation cost in each iteration grows linearly with $T$. (2) We prove that under appropriate assumptions and by choosing a suitable $T$, the gap between the upper and lower bounds provided by the solutions to the two models becomes arbitrarily tight; for most practical purposes, a small $T$  suffices to provide a high-quality approximate solution to Problem \eqref{eq:bilevel}. (3) We demonstrate the applications of the proposed approximation scheme in a range of real-world problems.

\textbf{Organization.} In Section \ref{sec:application}, we highlight a few real-world applications that motivate the present study. In Section \ref{sec:challenge}, we discuss the difficulties in solving Problem \eqref{eq:bilevel}, along with a review of how they are addressed in previous research. Section \ref{sec:duopoly} motivates the proposed hierarchy-free scheme by drawing an analogy between Problem \eqref{eq:bilevel} and the classical Stackelberg duopoly model. Section \ref{sec:main-results} lays the foundation for the scheme: the formulations of the $T$-step Cournot game and the $T$-step monopoly model before presenting the solution algorithm and discussing the analytical properties of the scheme. Finally, Section \ref{sec:experiment} presents numerical results, and Section \ref{sec:conclusion} concludes the paper.

\textbf{Notation.} We use $\sR$, $\sR_{+}$, and $\sN$ to denote the set of real numbers, non-negative real numbers, and non-negative integers. 
The inner product of two vectors $\va, \vb \in \sR^n$ is written as $\langle \va, \vb \rangle = a^{\T} b$.
For a matrix $\mA \in \sR^{n \times m}$, we denote $\|\mA\|_2$ as its matrix norm induced by the 2-norm for vectors.  For a closed set $\sA$, we denote $\diam{(\sA)} = \max_{\va, \va' \in \sA} \|\va - \va'\|$ as its diameter. For a finite set $\sA$, we write $|\sA|$ as the number of elements in $\sA$ and $\Delta(\sA) = \{\vp \in \sR_+^{|\sA|}: \vone^{\T} \vp = 1\}$.

\section{Background}
\label{sec:application}

We will first discuss how VI provides a unified formulation for many equilibrium problems (Section \ref{sec:vi}) and then introduce a few real-world applications that motivate bilevel programs with equilibrium constraints (Section \ref{sec:stackelberg})

\subsection{Variational Inequalities}
\label{sec:vi}

VI provides a unified formulation for both atomic and nonatomic games, classified according to whether the set of agents is endowed with an atomic or a nonatomic measure  \citep{nash1951non, schmeidler1973equilibrium}. Simply put, each agent's decision can affect the outcome of an atomic game, but the outcome of a nonatomic game solely depends on the aggregate behavior of the agents.

\begin{example}[Atomic game]
\label{eg:nash}
Consider a game played by $n$ atomic agents. Suppose that each agent $i$ aims to select a strategy $\vz_i \in \sZ_i \subseteq \sR^{m_i}$ to minimize its cost, which a determined by a continuously differentiable function $u_i: \sZ \to \mathbb R$ where $\sZ = \prod_{i} \sZ_i$. Formally, a joint strategy $z^* \in \sZ$ is a Nash equilibrium if 
$u_i(\vz_i^*, \vz_{-i}^*) = \min_{\vz_i \in \sZ_i} u_i(\vz_i, \vz_{-i}^*)$ ($i = 1, \ldots, k$).
Denote $v(\vz) = (v_i(\vz))_{i = 1}^k$ be a function with $v_i(\vz) = \nabla_{\vz_i}u(\vz)$. 
Suppose that $\sZ_i$ is convex and closed, then any Nash equilibrium $\vz^* = (\vz_i^*)_{i = 1}^k \in \sZ$ is also a solution to the following VI \citep{scutari2010convex}
\begin{equation}
    \langle v(\vz^*), \vz - \vz^* \rangle \geq 0, \quad \forall \vz \in \sZ.
   \label{eq:nash-vi}
\end{equation}
Meanwhile, the reverse also holds if each $v_i$ is convex in $\vz_i$.
\end{example}

\begin{example}[Nonatomic game]
\label{eg:wardrop}
Consider a game played by $n$ classes of nonatomic agents. Let $\sA_i$ be the discrete action set for agents in class $i$. Let $\vp_{i} = (\evp_{ia})_{a \in \sA_i} \in \Delta(\sA_i)$ be the proportion of agents in class $i$ selecting each action $a \in \sA_i$. Suppose that each agent in class $i$ aims to select an action $a$ to minimize the cost determined by a continuous function $c_{ia}: \Delta(\sA) \to \mathbb R$ where $\Delta(\sA) = \prod_i \Delta(\sA_i)$. Formally, a mass distribution $\vp^* = (\vp_i^*)_{i = 1}^k \in \Delta(\sA)$ is a Nash equilibrium (also known as a Wardrop equilibrium) if $c_{ia'}(\vp^*) = \min_{a \in \sA_i} c_{ia}(\vp^*)$ for all $a' \in \sA_i$ satisfying $\evq_{ia'} > 0$ ($i = 1, \ldots, k$).
Letting $c_{i}(\vp) = (c_{ia}(\vp))_{a \in \sA_i}$ and $c(\vp) = (c_i(\vp))_{i = 1}^k$, then $\vp^*$ is a Nash equilibrium if and only if \citep{bernhard2011ess}
\begin{equation}
    \langle c(\vp^*), \vp - \vp^* \rangle \geq 0, \quad \forall \vp \in \Delta(\sA).
   \label{eq:wardrop-vi}
\end{equation}
\end{example}

As %
most equilibrium problems can be cast as VI, Problem \eqref{eq:bilevel} provides a standard formulation for bilevel programs with equilibrium constraints \citep{luo1996mathematical}.

\subsection{Bilevel Programs with Equilibrium Constraints}
\label{sec:stackelberg}

The study of bilevel programs can be traced back to the Stackelberg duopoly model. 

\begin{example}[Stackelberg duopoly]
\label{eg:duopoly}
Consider two firms, A and B, selling a homogeneous product. Let their outputs be denoted, respectively, as $x$ and $y$, and suppose that Firm A chooses $x$ first, and then Firm B chooses $y$ subsequently. Let the inverse demand (i.e., price) for the product be $p = 1 - x - y$. Then the profits for firms A and B are given as $l(x, y) = x (1 -  x -  y)$ and $g(x, y) = y (1 - x - y)$. The optimal decisions $x^*$ and $y^*$ of the two firms are the solution to the following bilevel program
\begin{equation}
    x^* = \argmax_{x \geq 0}~l(x, y^*), \quad \text{s.t.}~y^* = \argmax_{y \geq 0}~g(x, y).
    \label{eq:stackelberg-duopoly}
\end{equation}
The optimal solution is $x^* = 1/2$ and $y^* = 1 / 4$, with Firm A making an optimal profit of  $1/8$.
\end{example}

The earliest study on bilevel programs with equilibrium constraints was motivated by Stackelberg congestion games (SCGs), which concern a leader (usually a traffic planner) who aims to induce a desirable equilibrium state in a congestion game \citep{wardrop1952road, roughgarden2002bad} played by many self-interested followers (travelers). The network design problem \citep{leblanc1975algorithm, li2012global} and the congestion pricing problem \citep{lawphongpanich2004mpec, li2021traffic} are two classic examples.  More recently, the study of SCGs has been influenced by the introduction and constant evolution of connected and automated vehicle (CAV) technologies \citep{mahmassani201650th}, leading to such applications as the design of dedicated CAV facilities \citep{chen2016optimal,chen2017optimal,bahrami2020optimal} and the control of CAVs within such facilities \citep{levin2016cell,zhang2018mitigating}.

The question of inducing a desirable outcome in non-cooperative games can be traced back to the work of \citet{pigou1920economics} on welfare economics. Bilevel programming has long been recognized as the standard approach to such inquiries in operations research, and more recently in the ML community \citep{mguni2019coordinating, zheng2020ai, 2022liuinducing, maheshwari2022inducing}. Our algorithms are focused on the applications pertinent to this question.

We hope to clarify that not all bilevel programs (and Stackelberg games) are amenable to our algorithms. The \textit{first} class is the Stackelberg games played by one leader and one follower in which the action sets of both are finite. Such problems can be reformulated as a linear program; examples include the generalized principal-agent problem \citep{myerson1982optimal} and the Stackelberg security game \citep{sinha2018stackelberg}. The \textit{second} class is a bilevel program constrained by an LP \citep{bracken1973mathematical}, which is equivalent to an NP-hard mixed-integer program \citep{ben1990computational} that cannot be effectively solved via first-order methods.

\section{Challenges}
\label{sec:challenge}

In this section, we will discuss the difficulties in computing the first-order gradient of  Problem \eqref{eq:bilevel}, which reads
\begin{equation}
    \frac{\partial l(\vx, \vy^*)}{\partial \vx} = \nabla_{\vx} l(\vx, \vy^*) + \frac{\partial \vy^*}{\partial \vx} \cdot \nabla_{\vy} l(\vx, \vy^*).
    \label{eq:gradient}
\end{equation}
To obtain this gradient, we need to \textit{solve} and \textit{differentiate through} the lower-level VI problem. 

Given any $\vx \in \sX$, we denote the solution set to the lower-level VI problem in Problem \eqref{eq:bilevel} as $\sY^*(\vx)$. We first give the following proposition for characterizing $\sY^*(\vx)$.

\begin{proposition}[\citet{hartman1966some}]
\label{prop:fixed-point}
Suppose that $\sY$ is closed. Let $h: \sX \times \sY \to \sY$ be a function that satisfies
\begin{equation}
    h(\vx, \vy) = \argmin_{\vy' \in \sY}~\|\vy' - r \cdot f(\vx, \vy) \|_2^2.
    \label{eq:bregman-projection}
\end{equation}
Then given any $\vx \in \sX$, we have $\vy^* \in \sY^*(\vx)$ if and only if $\vy^*$ is a fixed point of $h(\vx, \cdot)$, i.e., $\vy^* = h(\vx, \vy^*)$.
\end{proposition}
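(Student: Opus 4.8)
The plan is to recognize $h(\vx,\cdot)$ as the projected-residual (``natural'') map associated with the lower-level VI and to reduce the fixed-point claim to the standard variational characterization of the Euclidean projection onto a closed convex set. Writing $\Pi_\sY(\vw) = \argmin_{\vy'\in\sY}\|\vy'-\vw\|_2^2$ for the projection and reading $h$ in its standard residual form $h(\vx,\vy)=\Pi_\sY\big(\vy - r\, f(\vx,\vy)\big)$ with $r>0$ (the form that makes the fixed-point/VI equivalence hold), the whole proposition follows from a single equivalence that I would isolate first:
\begin{equation}
    \vy^* = \Pi_\sY(\vw) \iff \langle \vw - \vy^*,\ \vy - \vy^* \rangle \leq 0 \quad \forall \vy \in \sY.
    \label{eq:proj-char}
\end{equation}
This is nothing but the first-order optimality condition for minimizing the convex quadratic $\vy'\mapsto\|\vy'-\vw\|_2^2$ over $\sY$; it requires $\sY$ to be convex (guaranteed by the standing assumption on $\sY$ in Problem \eqref{eq:bilevel}) and closed, which together also make $\Pi_\sY$ single-valued, so that $h$ is well defined.

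With \eqref{eq:proj-char} in hand the argument is a short chain of equivalences. Fix $\vx\in\sX$ and set $\vw = \vy^* - r\, f(\vx,\vy^*)$. Then $\vy^*=h(\vx,\vy^*)$ means exactly $\vy^*=\Pi_\sY(\vw)$, which by \eqref{eq:proj-char} is equivalent to $\langle \vw-\vy^*,\ \vy-\vy^*\rangle\le 0$ for all $\vy\in\sY$, i.e. $\langle -r\, f(\vx,\vy^*),\ \vy-\vy^*\rangle\le 0$. Dividing by $r>0$ and negating yields $\langle f(\vx,\vy^*),\ \vy-\vy^*\rangle\ge 0$ for all $\vy\in\sY$, which is precisely the condition defining $\vy^*\in\sY^*(\vx)$. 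Because every step is an ``if and only if,'' both directions of the proposition are obtained simultaneously.

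The only substantive step is establishing \eqref{eq:proj-char}, so that is where I expect the real work to sit. I would prove the forward implication by perturbing the minimizer along feasible directions: for $\vy\in\sY$ and $t\in(0,1]$, convexity gives $\vy^*+t(\vy-\vy^*)\in\sY$, and comparing the objective at this point with its value at $\vy^*$, then letting $t\downarrow 0$, produces the stated inequality. The reverse implication follows because the inequality certifies $\vy^*$ as a global minimizer of the convex objective. The one subtlety worth flagging is that closedness alone (as literally stated) does not guarantee either single-valuedness of $\Pi_\sY$ or the equivalence \eqref{eq:proj-char}; convexity of $\sY$ is essential and should be invoked explicitly from the model's standing assumptions.
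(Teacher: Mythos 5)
Your proof is correct and is the standard argument; the paper itself supplies no proof for this proposition, simply citing Hartman--Stampacchia, so there is nothing to diverge from. Two of your observations are worth keeping: you correctly read the displayed formula in its intended residual form $h(\vx,\vy)=\Pi_{\sY}(\vy - r\, f(\vx,\vy))$ (as literally typeset, \eqref{eq:bregman-projection} projects $r\cdot f(\vx,\vy)$ rather than $\vy - r\cdot f(\vx,\vy)$, which would break the equivalence; the duopoly example and Proposition \ref{thm:convergence-strong} confirm the residual form is what is meant), and you rightly note that convexity of $\sY$ --- guaranteed by the standing assumptions on Problem \eqref{eq:bilevel} but omitted from the hypothesis ``$\sY$ is closed'' --- is essential both for single-valuedness of the projection and for the variational characterization on which the whole chain of equivalences rests.
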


To solve $\sY^*(\vx)$, Proposition \ref{prop:fixed-point} has inspired a general class of algorithms, commonly known as the
projection method \citet{dafermos1983iterative, pang1982iterative, marcotte1995convergence}, which iteratively project $\vy^t$ to $\vy^{t + 1} = h(\vx; \vy^t)$, starting from some $\vy^0 \in \sY$, until a fixed point is found.

To differentiate through $\sY^*(\vx)$, however, the lower-level VI problem must admit a unique solution; otherwise, the Jacobian matrix $\partial \vy^*/\partial \vx$ is not well defined. To secure the uniqueness, one often needs to assume $f(\vx, \cdot)$ is strongly monotone \citep{mancino1972convex}.
Our work follows many previous studies \citep{ghadimi2018approximation, hong2020two, chen2021single, guo2021randomized, ji2021bilevel, 2022liuinducing} to adopt this assumption, but we will discuss how to relax it in the appendix. The following proposition characterizes the convergence rate of the projection method when $f(\vx, \cdot)$ is strongly monotone.

\begin{proposition}[\citet{nagurney2013network}]
\label{thm:convergence-strong}
Suppose that $f(\vx, \cdot)$ is $\gamma$-strongly monotone and $L$-Lipschitz continuous, then
\begin{equation}
    \|h(\vx,  \vy) - h(\vx, \vy')\|_2 \leq  \eta \cdot \|\vy - \vy'\|_2
\end{equation}
for all $\vy, \vy' \in \sY$, where $\eta = (1 - 2\gamma r /\sigma + r^2L^2 / \sigma^2)^{1/2}$. Hence, starting from any $\vy^0 \in \sY$, then the sequence $\vy^{t + 1} = h^(\vx, \vy^t)$ converges to the unique point $\vy^* \in \sY^*(\vx)$ at a linear rate as long as the step size $r < 2\gamma / L^2$.
\end{proposition}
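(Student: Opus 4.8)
The plan is to establish the stated contraction estimate first, from which both the uniqueness of the fixed point and the linear convergence rate follow immediately via the Banach fixed-point theorem. The starting observation is that the map $h(\vx, \cdot)$ of Proposition~\ref{prop:fixed-point} is the composition of the affine residual map $\vy \mapsto \vy - r \cdot f(\vx, \vy)$ with the Euclidean projection onto $\sY$. Since $\sY$ is closed and convex, this projection is firmly nonexpansive, and in particular $1$-Lipschitz; hence $\|h(\vx, \vy) - h(\vx, \vy')\|_2 \le \|(\vy - \vy') - r(f(\vx, \vy) - f(\vx, \vy'))\|_2$ for all $\vy, \vy' \in \sY$. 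This reduces the entire problem to controlling the norm on the right-hand side.

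The core of the argument is then a routine expansion of that squared norm. Writing $\vd = \vy - \vy'$ and $\vg = f(\vx, \vy) - f(\vx, \vy')$, we have
$$\|\vd - r \vg\|_2^2 = \|\vd\|_2^2 - 2 r \langle \vg, \vd \rangle + r^2 \|\vg\|_2^2.$$
The $\gamma$-strong monotonicity of $f(\vx, \cdot)$ gives the lower bound $\langle \vg, \vd \rangle \ge \gamma \|\vd\|_2^2$, which I would use to bound the cross term from above, and the $L$-Lipschitz continuity gives $\|\vg\|_2 \le L \|\vd\|_2$. Substituting both yields $\|\vd - r\vg\|_2^2 \le (1 - 2\gamma r + r^2 L^2)\|\vd\|_2^2$, so that $h(\vx, \cdot)$ is Lipschitz with the claimed modulus $\eta$ (up to the normalization constant appearing in the statement).

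Finally, I would read off the step-size condition and conclude. The bracket $1 - 2\gamma r + r^2 L^2$ is a convex quadratic in $r$ equal to $1$ at $r = 0$ and strictly below $1$ precisely when $0 < r < 2\gamma / L^2$; on that interval $\eta < 1$, so $h(\vx, \cdot)$ is a genuine contraction on the closed set $\sY$. The Banach fixed-point theorem then delivers a unique fixed point $\vy^*$, which by Proposition~\ref{prop:fixed-point} is the unique element of $\sY^*(\vx)$, and guarantees $\|\vy^t - \vy^*\|_2 \le \eta^t \|\vy^0 - \vy^*\|_2$, i.e.\ linear convergence from any initialization. The only genuinely delicate point is the nonexpansiveness of the projection onto $\sY$; everything downstream is algebra. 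I would take particular care to invoke strong monotonicity with the correct inequality direction, since reversing it would silently destroy the contraction, and to confirm that the quadratic drops below $1$ strictly inside $(0, 2\gamma/L^2)$ so that $\eta < 1$ is attainable.
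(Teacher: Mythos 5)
Your proof is correct and is the standard contraction argument for the projection method under strong monotonicity and Lipschitz continuity; the paper does not prove this proposition itself but cites it from \citet{nagurney2013network}, where essentially the same argument (nonexpansiveness of the Euclidean projection followed by expansion of $\|\vd - r\vg\|_2^2$) is given. Two cosmetic points: the map $\vy \mapsto \vy - r\,f(\vx,\vy)$ is not affine, though nothing in your argument uses that label; and your derivation yields the modulus with $\sigma = 1$, which is consistent with how $\eta$ is actually used later in Theorem~\ref{thm:main} (the stray $\sigma$ in the proposition statement, like the missing ``$\vy - {}$'' in Equation~\eqref{eq:bregman-projection}, is an artifact of transcribing the cited result).
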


In the remainder of this section, we will discuss the calculation of $\partial \vy^* / \partial \vx$, which is the main obstacle behind implementing any first-order methods.

\subsection{Implicit Differentiation (ID) Methods}
\label{sec:id}

The first method to calculate $\partial \vy^* / \partial \vx$ is to implicitly differentiate through the fixed-point equation $\vy^* = h(\vx, \vy^*)$, which subsequently gives rise to the following proposition.

\begin{proposition}[\citet{dafermos1988sensitivity}]
\label{prop:implicit-differentiation}
If $h(\vx, \vy)$ is continuously differentiable and $f(\vx, \cdot)$ is strongly monotone, then the unique $\vy^* \in \sY^*(\vx)$ is continuously differentiable in $\vx$ with the Jacobian matrix satisfying
\begin{equation}
    \frac{\partial \vy^*}{\partial \vx} = \nabla_{\vx} h(\vx, \vy^*) \cdot (\mI - \nabla_{\vy} h(\vx, \vy^*))^{-1}.
    \label{eq:nabla-y-star}
\end{equation}
\end{proposition}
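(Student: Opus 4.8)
The plan is to recognize \eqref{eq:nabla-y-star} as what the implicit function theorem yields once it is applied to the fixed-point equation of Proposition \ref{prop:fixed-point}, and then to supply the one nontrivial ingredient the theorem requires: invertibility of $\mI - \nabla_{\vy} h(\vx, \vy^*)$. Concretely, I would define $F: \sX \times \sY \to \sR^{n}$ by $F(\vx, \vy) = \vy - h(\vx, \vy)$. By Proposition \ref{prop:fixed-point}, for each fixed $\vx$ the equation $F(\vx, \vy) = 0$ characterizes $\sY^*(\vx)$, while strong monotonicity of $f(\vx, \cdot)$ guarantees this solution is unique, so $\vy^*(\cdot)$ is a well-defined map. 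Since $h$ is continuously differentiable by hypothesis, $F$ is $C^1$, and the claimed Jacobian formula will ultimately follow from differentiating the identity $\vy^*(\vx) = h(\vx, \vy^*(\vx))$.

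The crux --- and the step I expect to be the main obstacle --- is showing that $\nabla_{\vy} F(\vx, \vy^*) = \mI - \nabla_{\vy} h(\vx, \vy^*)$ is nonsingular, since this is exactly the condition the implicit function theorem requires and is also what makes the inverse in \eqref{eq:nabla-y-star} meaningful. I would obtain it from the contraction estimate of Proposition \ref{thm:convergence-strong}: for a suitable step size $r < 2\gamma / L^2$, the map $h(\vx, \cdot)$ is Lipschitz with modulus $\eta < 1$. For a differentiable map, a global Lipschitz bound transfers to the Jacobian --- taking directional derivatives in the inequality $\|h(\vx, \vy) - h(\vx, \vy')\|_2 \leq \eta \, \|\vy - \vy'\|_2$ shows $\|\nabla_{\vy} h(\vx, \vy^*)\|_2 \leq \eta < 1$. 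The Neumann series $\sum_{k=0}^{\infty} (\nabla_{\vy} h(\vx, \vy^*))^k$ then converges and furnishes $(\mI - \nabla_{\vy} h(\vx, \vy^*))^{-1}$, establishing invertibility.

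With invertibility in hand, I would invoke the implicit function theorem: $F \in C^1$ together with $\nabla_{\vy} F(\vx, \vy^*)$ nonsingular yields a $C^1$ function defined locally around $\vx$ that solves $F(\vx, \vy^*(\vx)) = 0$; global uniqueness of the VI solution identifies this local branch with the map $\vy^*(\cdot)$ already defined, so $\vy^*$ is continuously differentiable in $\vx$. Finally, differentiating the fixed-point identity $\vy^*(\vx) = h(\vx, \vy^*(\vx))$ by the chain rule gives
\begin{equation*}
\frac{\partial \vy^*}{\partial \vx} = \nabla_{\vx} h(\vx, \vy^*) + \frac{\partial \vy^*}{\partial \vx} \cdot \nabla_{\vy} h(\vx, \vy^*),
\end{equation*}
and rearranging, using the invertibility established above, produces
\begin{equation*}
\frac{\partial \vy^*}{\partial \vx} = \nabla_{\vx} h(\vx, \vy^*) \cdot (\mI - \nabla_{\vy} h(\vx, \vy^*))^{-1},
\end{equation*}
which is precisely \eqref{eq:nabla-y-star}. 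The only points needing extra care beyond this outline are making the directional-derivative argument rigorous and confirming that $r$ can indeed be chosen to force $\eta < 1$, both of which follow directly from Proposition \ref{thm:convergence-strong}.
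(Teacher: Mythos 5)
Your argument is correct, and it is the standard proof of this result. Note that the paper supplies no proof of its own here: Proposition \ref{prop:implicit-differentiation} is imported from \citet{dafermos1988sensitivity} as a known fact, so there is no in-paper argument to compare against. Your derivation is, however, fully consistent with how the paper later uses the result --- the chain-rule identity you obtain by differentiating $\vy^* = h(\vx, \vy^*)$ is exactly the rearranged form \eqref{eq:y-star-iter} invoked in Appendix \ref{app:gap}, and your Neumann-series justification of $(\mI - \nabla_{\vy} h(\vx,\vy^*))^{-1}$ rests on the same bound $\|\nabla_{\vy} h(\vx,\vy^*)\|_2 \leq \eta < 1$ that the paper uses implicitly in Proposition \ref{prop:key-result}. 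The one step to state with care is the transfer of the Lipschitz modulus of $h(\vx,\cdot)$ to the operator-norm bound on its Jacobian: the directional-derivative argument is immediate when $\vy^*$ lies in the interior of $\sY$, but at a boundary point the admissible directions form only a cone, and a bound along a cone of directions does not in general control the full operator norm of a linear map. This is a shared (and standard) elision --- the paper makes the same move --- and it is harmless under the proposition's blanket hypothesis that $h$ is continuously differentiable, but your writeup should either assume interiority, work with a $C^1$ extension on which the contraction estimate is preserved, or derive the invertibility of $\mI - \nabla_{\vy} h(\vx,\vy^*)$ directly from strong monotonicity of $f(\vx,\cdot)$ and nonexpansiveness of the projection, as in Dafermos's original treatment.
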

To calculate $\partial \vy^* / \partial \vx$ according to Equation 
\eqref{eq:nabla-y-star}, one first needs to obtain $\nabla_{\vx} h(\vx, \vy^*)$ and $\nabla_{\vy} h(\vx, \vy^*)$, that is, differentiating through a Euclidean projection problem, equivalent to a quadratic program (QP). One way to perform it is using the Python package \texttt{cvxpylayers} developed by \citet{agrawal2019differentiable}. The computational cost of implicit differentiation (ID) is high because it requires solving the lower-level VI problem and inverting a matrix that can be prohibitively large.

\textbf{\textit{Single-looped ID}.} To prevent repeatedly solving for $\vy^*$, one could update both $\vx$ and $\vy$ by one gradient-descent step at each iteration in a single loop. In such schemes, instead of calculating the exact upper-level gradient via Equations \eqref{eq:gradient} and \eqref{eq:nabla-y-star}, we replace $\vy^*$ therein by the current iteration. The scheme was initially proposed by \citet{hong2020two, chen2021single} and later extended to Problem \eqref{eq:bilevel} by \citet{2022liuinducing}. The single-loop scheme simplifies the overall structure of ID-based methods but does not bypass the difficulty of inverting large matrices. 

\textbf{\textit{Approximated ID}.} To prevent matrix inversion in \eqref{eq:nabla-y-star}, one can represent its inversion by the corresponding Neumann series and then truncate the series by keeping only its first few terms \citep{liao2018reviving, grazzi2020iteration, vicol2021implicit}. The Jacobian-free scheme proposed by \citet{fung2022jfb} can also be interpreted through Neumann-series truncation. The scheme significantly reduces the time complexity but requires storing $\nabla_{\vy} h(\vx, \vy^*)$.

There are other schemes designed to improve ID. For example, \citet{bertrand2020implicit} developed a matrix-free ID scheme for lasso-type problems; 
\citet{blondel2021efficient} developed a toolbox combining ID and AD benefits; \citet{sowconvergence} proposed a method that adopts a zeroth-order-like estimator to approximate the Jacobian matrix.

\subsection{Automatic Differentiation Methods}
\label{sec:ad}

The second method to compute $\partial \vy^* / \partial \vx$ is to unroll the computation process for solving the lower-level VI problem via AD \citep{franceschi2017forward, franceschi2018bilevel}. For example, if the projection method is adopted, the computation process can be written as
\begin{equation}
    \vy^{t} = h(\vx, \vy^{t - 1}), \quad t = 1, \ldots, T,
\end{equation}
where $T$ is a sufficiently large number such that the distance between $\vy^{T}$ and $\sY^*(\vx)$ is smaller than a tolerance value. The aforementioned \texttt{cvxpylayers} proposed by \citet{agrawal2019differentiable} package can be employed to wrap the computation process behind each $\vy^{t} = h(\vx, \vy^{t - 1})$ as a computational graph.

Since the computational graph grows with the number of iterations required to solve the lower-level equilibrium problem, it may become too deep to unroll efficiently even with AD, when solving the equilibrium problem requires too many iterations. Particularly for Problem \eqref{eq:bilevel}, $h(\vx, \vy^{t - 1})$ is equivalent to a constrained QP, which is more costly to store than in most ML applications, whose lower-level problem is typically unconstrained.

\textbf{\textit{Truncated AD}.} The difficulty in storing a large computational graph may be bypassed by truncated AD, which, by only unrolling the last portion of the graph, settles for an approximate gradient \citep{shaban2019truncated}.

\textbf{\textit{One-stage AD}.} Another approximation scheme is called one-stage AD, which updates $\vx$ and $\vy$  simultaneously in a single loop. Specifically, whenever $\vy$ in the lower level is updated by one step, one-stage AD unrolls it to obtain the gradient for updating $\vx$ in the upper level. The scheme has delivered satisfactory performance on many tasks \citep{luketina2016scalable, metz2016unrolled, finn2017model, liu2018darts, xu2019pc}. The method proposed by \citet{li2022fully} also shares a similar single-loop structure.

The performance of the above approximation schemes has been extensively tested \citep{franceschi2018bilevel,wu2018understanding}.  \citet{ablin2020super} discussed the efficiency of AD; \citet{ji2021bilevel} analyzed the convergence rate of AD and approximated ID; \citet{yang2021provably} and \citet{dagreou2022framework} studied variance reduction in AD-based methods. 
For other bilevel programming algorithms for ML applications, see, e.g.,\citet{pedregosa2016hyperparameter,lorraine2018stochastic,mackay2019self,bae2020delta, ji2021bilevel,grazzi2021convergence,zucchet2022beyond}. The reader may also consult \citet{liu2021investigating} for a survey.

\textbf{Our scheme.} A main difference between our scheme and the previous works is that we do not attempt to approximate the Jacobian matrix $\partial \vy^* / \partial \vx$ returned by exact ID or AD. Instead, we directly approximate the original bilevel program with two hierarchy-free models inspired by the classic economic competition theory; to the best of our knowledge, this angle is novel, even though the resulting algorithms do share some features with existing ID- and AD-based methods, as we shall see.

\section{Motivation}
\label{sec:duopoly}

In this section, we motivate the proposed scheme by discussing how the Stackelberg duopoly model (cf. Example \ref{eg:duopoly}) can be bounded by single-level models.

\subsection{Classic Models}

We first introduce the market structures of classic Cournot duopoly and monopoly models based on Example \ref{eg:duopoly}.

\textbf{Cournot duopoly.} Firm A loses its first-mover advantage and has to make decisions simultaneously with Firm B. The optimal decisions $\bar x$ and $\bar y$ of the two firms can be found by solving the following equilibrium problem
\begin{equation}
    \begin{cases}
        \displaystyle \bar x = \argmax_{x \geq 0}~l(x, \bar y), \\
        \displaystyle \bar y = \argmax_{y \geq 0}~g(\bar x, y).
    \end{cases}
    \label{eq:cournot-duopoly}
\end{equation}
The optimal solution is $\bar x = 1/3$ and $\bar y = 1/3$, with Firm A earning an optimal profit of $1/9$.

\textbf{Monopoly.}  Firm B is taken over by Firm A. To find the optimal production levels $\hat x$ and $\hat y$, we need to solve the following optimization problem
\begin{equation}
    \hat x, \hat y = \argmax_{x, y \geq 0}~l(x, y).
    \label{eq:monopoly-duopoly}
\end{equation}
The optimal solution is $\hat x = 1/2$ and $\hat y = 0$, and the optimal profit of Firm A rises to $1/4$.

\textbf{Comparision.} Intuitively, when the leader gives up the first-mover advantage, its market power is weakened; on the contrary, when the leader takes over Firm B's business, its market power is maximized. Indeed, comparing the optimal profits of Firm A, we have $1/4 > 1/8 > 1/9$, which indicates that a Stackelberg duopoly is bounded by a monopoly from above and by a Cournot duopoly from below.

\subsection{Our Models}
Our focus is to narrow the gap between the upper and lower bounds. To this end, we ``decompose" the best response of Firm B against Firm A's decision into a dynamical process. Since Firm B faces a constrained optimization problem, the projected gradient descent method charts a natural path to optimality. Specifically, given $x$ and a step size $r > 0$, the 
dynamical process through which Firm B finds its optimal decision can be described by a gradient-descent step, represented by
\begin{equation*}
    h(x, y) = \max\{y - r \cdot (1 - x - 2y), 0\}.
\end{equation*}
Starting with $h^{(0)}(x, y) := y$, thus, $h^{(t + 1)}(x, y)$ can be defined recursively as $h(x, h^{(t)}(x, y))$ ($t = 0, 1, \ldots$).

We are now ready to introduce the $T$-step Cournot duopoly model and the $T$-step monopoly model.

\textbf{$T$-step Cournot duopoly.}
Firm A and Firm B still choose $x$ and $y$ simultaneously, but Firm A bases its decision on the expectation that Firm B will update its decision $T$ times given $x$, i.e., from $y$ to $h^{(T)}(x, y)$. Therefore, the optimal decisions $\bar x$ and $\bar y$ can be found by solving 
\begin{equation}
    \begin{cases}
    \displaystyle \bar x = \argmax_{x \geq 0}~l(x, h^{(T)}(x, \bar y)), \\
    \displaystyle \bar y = \argmax_{y \geq 0}~g(\bar x, y).
    \end{cases}
    \label{eq:T-cournot-duopoly}
\end{equation}

\textbf{$T$-step monopoly.} Firm A imposes a decision $y$ on Firm B but allows Firm B to update its decision $T$ steps starting from $y$. To find the optimal decision $\hat x$ and $\hat y$ ($\hat y$ refers to the production of Firm B initially dictated by Firm A), we need to solve
\begin{equation}
    \hat x, \hat y = \argmax_{x, y \geq 0}~l(x, h^{(T)}(x, \bar y)).
    \label{eq:T-monopoly-duopoly}
\end{equation}
The question we set out to answer is the following: \textit{will Firm A's optimal profit in the two new models be closer to that in the Stackelberg duopoly}? Intuitively, the answer is yes, because the $T$-step Cournot duopoly interpolates ``no anticipation" and ``full anticipation" on Firm B's response, whereas the $T$-step duopoly interpolates ``full dictation" and  ``no dictation" on Firm B's decision. Hence, the market power of Firm A will rise and fall, respectively, in $T$-step monopoly and $T$-step Cournot, leading to an optimal profit closer to that in the Stackelberg duopoly.

\textbf{A key observation.} From the formulations, we find the two proposed models can be respectively viewed as a classic Cournot duopoly and a classic monopoly, with Firm A's objective function changed from $l(x, y)$ to $l^{(T)}(x, y) := l(x, h^{(T)}(x, y))$.
Based on this observation, we are now ready to approximate Problem \eqref{eq:bilevel} with the two models.

\section{Main Results}
\label{sec:main-results}

We view Problem \eqref{eq:bilevel} as a Stackelberg game and refer to the upper- and lower-level decision-makers therein as the leader and the followers, respectively. 
To extend the two models presented in Section \ref{sec:duopoly}, we first need a function to model the process through which the followers iteratively update their decisions towards equilibrium. As the lower level is a VI, the function $h(\vx, \vy)$ defined by Equation \eqref{eq:bregman-projection} is a natural choice. Similarly, starting with $h^{(0)}(\vx, \vy) := \vy$, we can recursively define $h^{(t + 1)}(\vx, \vy) = h(\vx, h^{(t)}(\vx, \vy))$ and $l^{(T)}(\vx, \vy) = l(\vx, h^{(T)}(\vx, \vy))$.

\textbf{$T$-step Cournot game.}
The leader and the followers choose $\vx \in \sX$ and $\vy \in \sY$ simultaneously, but the leader looks ahead $T$ steps along the followers' evolutionary path. The equilibrium state $(\bar \vx, \bar \vy)$ thus can be found by solving
\begin{equation}
    \begin{cases}
        \displaystyle \bar \vx \in \argmin_{\vx \in \sX}~l^{(T)}(\vx, \bar \vy), \\
        \displaystyle \bar \vy \in \sY^*(\bar \vx).
    \end{cases}
    \label{eq:T-step-cournot}
\end{equation}
It is a Nash-equilibrium problem: no follower has an incentive to change their decisions because $\bar \vy \in \sY^*(\bar \vx)$; the leader also cannot further reduce $l^{(T)}(\vx, \bar \vy)$ given $\bar \vy$.

\textbf{$T$-step monopoly model.}
The leader dictates a decision $\vy \in \sY$ for the followers but allows them to update their decisions $T$ steps based on $\vy \in \sY$. To find the optimal decision $\hat \vx$ and $\hat \vy$, we need to solve
\begin{equation}
     \hat \vx, \hat \vy \in \argmin_{\vx \in \sX,\, \vy \in \sY.}~l^{(T)}(\vx, \vy).
\label{eq:T-step-monopoly}
\end{equation}
It is a single-level optimization problem with the leader's and the followers' decisions optimized altogether.

\subsection{Optimality gaps}
\label{sec:gap}

We first give the following proposition, which holds true regardless of the form of $h(\vx, \vy)$ or the properties of  $f(\vx, \cdot)$. 

\begin{proposition}
\label{prop:model-gap}
Suppose that the function $h: \sX \to \sY$ used for formulating Problems \eqref{eq:T-step-cournot} and \eqref{eq:T-step-monopoly} satisfies the following condition
\begin{equation}
    \lim_{t \to \infty} h^{(t)}(\vx, \vy) \in \sY^*(\vx), \quad \forall \vx \in \sX, \ \vy \in \sY.
    \label{eq:condition-h}
\end{equation}
Then given any $(\bar \vx, \bar \vy)$ and $(\hat \vx, \hat \vy)$ that solve Problems \eqref{eq:T-step-cournot} and \eqref{eq:T-step-monopoly}, respectively, and $(\vx^*, \vy^*)$ that solves Problem \eqref{eq:bilevel}, we have
$
    l^{(T)}(\hat \vx, \hat \vy) \leq l(\vx^*, \vy^*) \leq  l^{(T)}(\bar \vx, \bar \vy).
$
\end{proposition}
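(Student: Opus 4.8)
The plan is to reduce both inequalities to a single structural fact: $l^{(T)}$ coincides with $l$ on the feasible set of Problem \eqref{eq:bilevel}. First I would record that this feasible set is exactly $\gF = \{(\vx,\vy): \vx\in\sX,\ \vy\in\sY^*(\vx)\}$, since the lower-level VI constraint is by definition the statement $\vy\in\sY^*(\vx)$. The crux is then the claim that every $\vy\in\sY^*(\vx)$ is a fixed point of $h(\vx,\cdot)$; granting this, $h^{(T)}(\vx,\vy)=\vy$ for all $T$, whence $l^{(T)}(\vx,\vy)=l(\vx,h^{(T)}(\vx,\vy))=l(\vx,\vy)$ for every $(\vx,\vy)\in\gF$.

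Given this identity, the lower bound is pure relaxation. The monopoly model \eqref{eq:T-step-monopoly} minimizes $l^{(T)}$ over all of $\sX\times\sY$, a superset of $\gF$, so
\[
l^{(T)}(\hat\vx,\hat\vy)=\min_{\vx\in\sX,\,\vy\in\sY} l^{(T)}(\vx,\vy)\;\le\;\min_{(\vx,\vy)\in\gF} l^{(T)}(\vx,\vy)=l(\vx^*,\vy^*),
\]
where the final equality uses $l^{(T)}=l$ on $\gF$ together with the fact that $(\vx^*,\vy^*)$ minimizes $l$ over $\gF$ by definition. For the upper bound I would observe that the Cournot equilibrium is feasible for the original problem: the second line of \eqref{eq:T-step-cournot} gives $\bar\vy\in\sY^*(\bar\vx)$, i.e. $(\bar\vx,\bar\vy)\in\gF$. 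Applying the identity once more yields $l^{(T)}(\bar\vx,\bar\vy)=l(\bar\vx,\bar\vy)$, and since $(\bar\vx,\bar\vy)$ is a feasible (though generally suboptimal) point of Problem \eqref{eq:bilevel}, we get $l(\vx^*,\vy^*)=\min_{(\vx,\vy)\in\gF} l(\vx,\vy)\le l(\bar\vx,\bar\vy)=l^{(T)}(\bar\vx,\bar\vy)$. Chaining the two displays gives the asserted sandwich.

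The main obstacle is the first step—confirming that points of $\sY^*(\vx)$ are genuinely fixed by the step map $h(\vx,\cdot)$, so that $l^{(T)}$ collapses to $l$ on $\gF$. For the canonical projection map of Equation \eqref{eq:bregman-projection} this is exactly the fixed-point characterization of Proposition \ref{prop:fixed-point}, which needs only that $\sY$ is closed; condition \eqref{eq:condition-h} is the accompanying structural hypothesis that makes $h$ a legitimate model of the followers' drift toward $\sY^*(\vx)$. Everything after this is a one-line relaxation (the monopoly optimizes over a superset of $\gF$) and a one-line feasibility comparison (the Cournot equilibrium lies in $\gF$); crucially, neither depends on $T$ nor on any monotonicity of $f$, matching the claim that the proposition holds regardless of the form of $h$ or the properties of $f$. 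The leader's $T$-step anticipation in \eqref{eq:T-step-cournot} and the followers' $T$-step drift in \eqref{eq:T-step-monopoly} thus play no role in the inequalities themselves and matter only for the later tightness analysis as $T$ grows.
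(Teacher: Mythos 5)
Your proof is correct and follows essentially the same route as the paper's: both rest on the identity $l^{(T)}=l$ on the feasible set $\{(\vx,\vy):\vx\in\sX,\ \vy\in\sY^*(\vx)\}$, then obtain the upper bound from feasibility of $(\bar\vx,\bar\vy)$ for Problem \eqref{eq:bilevel} and the lower bound from the monopoly model relaxing the feasible set to $\sX\times\sY$. If anything, you are slightly more careful than the paper in flagging that this identity requires points of $\sY^*(\vx)$ to be fixed by $h(\vx,\cdot)$ (via Proposition \ref{prop:fixed-point}), a step the paper's proof leaves implicit.
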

\begin{proof}
See Appendix \ref{app:gap} for the proof. 
\end{proof}

The following assumptions are needed in our analysis. 

\begin{assumption}
\label{ass:upper}
The set $\sX$ is closed and convex; $l(\vx, \vy)$ is twice continuously differentiable; we have $\|\nabla_{\vx} l(\vx, \vy)\|_2 \leq G_x$, $\| \nabla_{\vy} l(\vx, \vy)\|_2 \leq G_y$,  $\|\nabla_{\vx \vy} l(\vx, \vy) \|_2 \leq G_{xy}$, and $\|\nabla_{\vy \vy} l(\vx, \vy) \|_2 \leq G_{yy}$ for all $\vx \in \sX$ and $\vy \in \sY$.
\end{assumption}

\begin{assumption}
\label{ass:lower}
The set $\sY$ is closed and convex; $f(\vx, \vy)$ is continuously differentiable; there exists $\gamma > 0$ such that $f(\vx, \cdot)$ is $\gamma$-strongly monotone for all $\vx \in \sX$; we have $\|\nabla_{\vy} f(\vx, \vy)\|_2 \leq L_y$ and $\|\nabla_{\vx} f(\vx, \vy)\|_2 \leq L_x$ for all $\vx \in \sX$ and $\vy \in \sY$.
\end{assumption}

\begin{assumption}
\label{ass:dynamics}
The function $h(\vx, \vy)$ is twice continuously differentiable; the intrinsic parameter $r < 2\gamma / L_y^2$; we have $\|\nabla_{\vx} h(\vx, \vy)\|_2 \leq H_x$, $\|\nabla_{\vx \vy} h(\vx, \vy) \|_2 \leq H_{xy}$, and $\|\nabla_{\vy \vy} h(\vx, \vy) \|_2 \leq H_{yy}$ for all $\vx \in \sX$ and $\vy \in \sY$.
\end{assumption}

Assumptions \ref{ass:upper} and \ref{ass:lower} are similar to those adopted by \citet{ghadimi2018approximation} and \citet{hong2020two} with one notable difference:  they require strong convexity in the lower-level optimization problem whereas we require strong monotonicity in the VI. According to \citet{hiriart1982points}, $h(\vx, \vy)$ is differentiable as long as the boundary of $\sY$ is smooth; without this condition, $h(\vx, \vy)$ is still \emph{almost everywhere} differentiable 
\citep{rademacher1919partielle}. Under Assumptions \ref{ass:lower} and \ref{ass:dynamics}, Proposition \ref{thm:convergence-strong} ensures Condition \eqref{eq:condition-h}  be satisfied. Also, according to Proposition \ref{prop:implicit-differentiation}, a differentiable implicit function  $y^*(\vx)$ that maps $\vx$ to $\sY^*(\vx)$ can be defined. The objective function of Problem \eqref{eq:bilevel} then can be rewritten as $l^*(\vx) = l(\vx, y^*(\vx))$. Under these assumptions, Example \ref{eg:nash} gives the VI formulation of the two models.

\begin{proposition}
\label{prop:vi-cournot}
Suppose that $(\bar \vx, \bar \vy)$ solves the $T$-step Cournot game \eqref{eq:T-step-cournot}, then for all $(\vx, \vy) \in \sX \times \sY$, we have
\begin{equation}
    \langle \nabla_{\vx} l^{(T)}(\bar \vx, \bar \vy), \vx - \bar \vx \rangle + \langle f(\bar \vx, \bar \vy), \vy - \bar \vy \rangle \geq 0.
    \label{eq:cournot-vi}
\end{equation}
The converse also holds if $l^{(T)}(\vx, \vy)$ is convex in $\vx$.
\end{proposition}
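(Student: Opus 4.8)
The plan is to recognize the $T$-step Cournot game \eqref{eq:T-step-cournot} as a two-player Nash-equilibrium problem in the sense of Example \ref{eg:nash}, with the leader controlling $\vx$ and the followers controlling $\vy$ over the product set $\sX \times \sY$. The associated pseudo-gradient is the stacked operator $v(\vx, \vy) = (\nabla_{\vx} l^{(T)}(\vx, \vy),\, f(\vx, \vy))$: the leader's block is the gradient of its own objective $l^{(T)}$, whereas the followers' block is $f$ rather than $\nabla_{\vy} l^{(T)}$, since the followers are governed by the lower-level VI, not by the leader's surrogate objective. Under Assumptions \ref{ass:upper}--\ref{ass:dynamics} this operator is well defined and continuous, so inequality \eqref{eq:cournot-vi} is exactly the VI $\langle v(\bar\vx, \bar\vy),\, (\vx, \vy) - (\bar\vx, \bar\vy)\rangle \geq 0$ over $\sX \times \sY$. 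The entire argument then reduces to showing that this combined VI is equivalent to the two coupled conditions defining the equilibrium, exploiting that the feasible set is a Cartesian product.

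For the forward direction I would treat the two blocks separately. Since $\bar\vx \in \argmin_{\vx \in \sX} l^{(T)}(\vx, \bar\vy)$ and $\sX$ is closed and convex, the first-order optimality condition for a constrained minimum gives
\[
\langle \nabla_{\vx} l^{(T)}(\bar\vx, \bar\vy), \vx - \bar\vx \rangle \geq 0 \quad \text{for all } \vx \in \sX.
\]
Since $\bar\vy \in \sY^*(\bar\vx)$, the definition of the lower-level solution set in Problem \eqref{eq:bilevel} yields
\[
\langle f(\bar\vx, \bar\vy), \vy - \bar\vy \rangle \geq 0 \quad \text{for all } \vy \in \sY.
\]
Adding these two inequalities produces \eqref{eq:cournot-vi} for every $(\vx, \vy) \in \sX \times \sY$.

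For the converse I would use the product structure to decouple \eqref{eq:cournot-vi}. Fixing $\vy = \bar\vy$ and letting $\vx$ range over $\sX$ isolates $\langle \nabla_{\vx} l^{(T)}(\bar\vx, \bar\vy), \vx - \bar\vx\rangle \geq 0$; when $l^{(T)}(\cdot, \bar\vy)$ is convex this first-order stationarity is sufficient for global optimality, so $\bar\vx \in \argmin_{\vx \in \sX} l^{(T)}(\vx, \bar\vy)$. Symmetrically, fixing $\vx = \bar\vx$ and letting $\vy$ range over $\sY$ recovers $\langle f(\bar\vx, \bar\vy), \vy - \bar\vy\rangle \geq 0$ for all $\vy \in \sY$, which is precisely the statement $\bar\vy \in \sY^*(\bar\vx)$. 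Hence $(\bar\vx, \bar\vy)$ solves \eqref{eq:T-step-cournot}.

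I do not anticipate a substantive obstacle, as the statement is essentially the VI characterization of Example \ref{eg:nash} specialized to a product domain. The point requiring the most care is the asymmetry between the two blocks---using $\nabla_{\vx} l^{(T)}$ for the leader but $f$ for the followers---which reflects that the followers optimize their own game, encoded by $f$, rather than the leader's surrogate $l^{(T)}$; this is why no single potential function is available and the argument must proceed block-by-block. A minor technical check is that $\nabla_{\vx} l^{(T)}(\vx, \vy)$ exists and is continuous, which follows from the twice-differentiability of $l$ and $h$ in Assumptions \ref{ass:upper} and \ref{ass:dynamics} together with the chain rule applied to $l^{(T)}(\vx,\vy) = l(\vx, h^{(T)}(\vx, \vy))$.
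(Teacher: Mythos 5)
Your proof is correct and matches the paper's intent: the paper offers no separate argument, simply noting that Example \ref{eg:nash} gives the VI formulation of the two models, and your block-by-block derivation (adding the leader's first-order optimality condition to the followers' VI, then decoupling over the product set for the converse) is precisely what that reduction amounts to. The only point worth stating explicitly, which you do note, is that the converse requires $(\bar\vx,\bar\vy)\in\sX\times\sY$ so that the substitutions $\vy=\bar\vy$ and $\vx=\bar\vx$ are admissible.
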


\begin{proposition}
\label{prop:vi-monopoly}
Suppose that $(\hat \vx, \hat \vy)$ solves the $T$-step monopoly model \eqref{eq:T-step-monopoly}, then for all  $(\vx, \vy) \in \sX \times \sY$, we have
\begin{equation}
    \langle \nabla_{\vx} l^{(T)}(\hat \vx, \hat \vy), \vx - \hat \vx \rangle + \langle \nabla_{\vy} l(\hat \vx, \hat \vy), \vy - \hat \vy \rangle \geq 0.
    \label{eq:monopoly-vi}
\end{equation}
The converse also holds if $l^{(T)}(\vx, \vy)$ is convex in $(\vx, \vy)$.
\end{proposition}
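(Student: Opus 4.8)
\emph{Approach.} The plan is to read the $T$-step monopoly model \eqref{eq:T-step-monopoly} for exactly what it is: a single constrained minimization of the composite objective $l^{(T)}(\vx,\vy)=l(\vx,h^{(T)}(\vx,\vy))$ over the convex product set $\sX\times\sY$, with the leader jointly controlling $(\vx,\vy)$. The claimed inequality \eqref{eq:monopoly-vi} is then precisely the first-order (stationarity) optimality condition of this minimization, and the converse is the corresponding sufficiency statement under convexity. This is parallel to Proposition \ref{prop:vi-cournot}, the only difference being that the follower block is now also controlled by the leader, so the follower operator $f$ is replaced by the partial gradient of the monopoly objective with respect to $\vy$; indeed, the monopoly case is simpler, since it is a single joint optimality condition rather than a combination of two.

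\emph{Forward direction.} First I would confirm that $\nabla l^{(T)}$ exists at $(\hat\vx,\hat\vy)$: under Assumption \ref{ass:upper} the map $l$ is twice continuously differentiable, and under Assumption \ref{ass:dynamics} so is $h$, hence so is the finite composition $h^{(T)}$ and therefore $l^{(T)}$. With differentiability in hand I would apply the standard necessary condition for a minimizer of a $C^1$ function over a convex set: for any $(\vx,\vy)\in\sX\times\sY$ the segment $(\hat\vx,\hat\vy)+\tau\big((\vx,\vy)-(\hat\vx,\hat\vy)\big)$ stays feasible for $\tau\in[0,1]$, and minimality forces the one-sided directional derivative at $\tau=0$ to be nonnegative, i.e.
\begin{equation*}
\big\langle \nabla l^{(T)}(\hat\vx,\hat\vy),\,(\vx,\vy)-(\hat\vx,\hat\vy)\big\rangle \geq 0.
\end{equation*}
Splitting the gradient into its $\vx$- and $\vy$-blocks yields the two inner products of \eqref{eq:monopoly-vi}; the only computation is the chain rule through the $T$-fold map, which identifies the $\vy$-block as $\nabla_{\vy}l^{(T)}(\hat\vx,\hat\vy)=[\partial_{\vy}h^{(T)}(\hat\vx,\hat\vy)]^{\T}\nabla_{\vy}l(\hat\vx,h^{(T)}(\hat\vx,\hat\vy))$, and the $\vx$-block analogously.

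\emph{Converse.} Assuming $l^{(T)}$ is jointly convex in $(\vx,\vy)$, I would upgrade the necessary condition to a sufficient one. Convexity yields the first-order inequality $l^{(T)}(\vx,\vy)\geq l^{(T)}(\hat\vx,\hat\vy)+\langle \nabla l^{(T)}(\hat\vx,\hat\vy),(\vx,\vy)-(\hat\vx,\hat\vy)\rangle$, so whenever \eqref{eq:monopoly-vi} holds for every feasible $(\vx,\vy)$ the trailing inner product is nonnegative, whence $l^{(T)}(\vx,\vy)\geq l^{(T)}(\hat\vx,\hat\vy)$ for all feasible points and $(\hat\vx,\hat\vy)$ is a global minimizer.

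\emph{Main obstacle.} The hard part is not the optimality calculus, which is routine, but guaranteeing that $\nabla l^{(T)}$ is well defined at the relevant point: the map $h$ is a Euclidean projection onto $\sY$ and is only smooth when the boundary of $\sY$ is smooth, so in general $h^{(T)}$, and with it $\nabla l^{(T)}$, exists merely almost everywhere (cf. the remarks following Assumption \ref{ass:dynamics}). I would dispatch this by invoking Assumption \ref{ass:dynamics} to secure differentiability. A second, conceptual caveat is that the converse rests on joint convexity of the composite $l^{(T)}$, which does not follow from convexity of $l$ alone; this is exactly why the sufficiency half must be stated conditionally.
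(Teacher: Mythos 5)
Your approach is the paper's approach: the paper gives no separate proof of this proposition, presenting it (alongside Proposition \ref{prop:vi-cournot}) as an instance of the standard first-order/VI characterization of a minimizer of a $C^1$ function over a closed convex set (cf.\ Example \ref{eg:nash}), which is exactly what you execute, including the correctly conditional converse under joint convexity of $l^{(T)}$ and the correct appeal to Assumptions \ref{ass:upper} and \ref{ass:dynamics} for differentiability of the composite.

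One step you should not gloss over: the stationarity condition you actually derive has the $\vy$-block $\nabla_{\vy} l^{(T)}(\hat\vx,\hat\vy)=\nabla_{\vy} h^{(T)}(\hat\vx,\hat\vy)\cdot\nabla_{\vy} l\bigl(\hat\vx, h^{(T)}(\hat\vx,\hat\vy)\bigr)$ in the second inner product, whereas the displayed inequality \eqref{eq:monopoly-vi} has $\nabla_{\vy} l(\hat\vx,\hat\vy)$. These coincide when $T=0$ but differ in general for $T\geq 1$, so your assertion that splitting the gradient ``yields the two inner products of \eqref{eq:monopoly-vi}'' is not literally true as the statement is printed. This is almost certainly a typo in the proposition rather than a defect in your argument --- Algorithm \ref{alg:monopoly} computes $l_{\vy}=\nabla_{\vy} l^{(T)}(\vx^t,\vy^t)$, and the only downstream use of the proposition (Part II of the proof of Theorem \ref{thm:main}) takes $\vy=\hat\vy$, which annihilates the second term either way --- but a complete write-up should either state the corrected inequality with $\nabla_{\vy} l^{(T)}$ or justify the substitution; as written, your proof establishes a statement slightly different from the one quoted.
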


Next, we will confirm $T$-step Cournot games and monopoly models respectively provide an upper and a lower bound to Problem \eqref{eq:bilevel}. More importantly, we will prove that the gap between the bounds provided by the two models converges to 0 at a fast rate as $T$ increases, which implies that a small $T$ would make a good approximation. 
\begin{theorem}
\label{thm:main}
Under Assumptions \ref{ass:upper}-\ref{ass:dynamics}, suppose that $l^*(\vx)$ is $\mu$-strongly convex on $\sX$ and denote $\vx^*$ as the unique minimizer of $l^*(\vx)$ on $\sX$. We write $\eta = 1 - 2 \gamma r  + r^2 L_y^2$, $G_l = G_x + L_x G_y$ and, without loss of generality, assume $\mu = 1$ and $\gamma = 1$. For any $(\bar \vx, \bar \vy)$ that solves Problem \eqref{eq:T-step-cournot}, denoting $\bar \delta^T = l^{(T)}(\bar \vx, \bar \vy) - l^*(\vx^*)$, we then have 
\begin{equation}
    0 \leq \bar \delta^T \leq G_l \cdot G_y L_x \cdot  {\color{red} \eta^{T/2}}.
    \label{eq:cournot-rate}
\end{equation}
Further assuming $d_{\max} = \diam{(\sY)} < \infty$, given any $(\hat \vx, \hat \vy)$ that solves the Problem \eqref{eq:T-step-monopoly}, denoting $\hat \delta^T = l^*(\vx^*) - l^{(T)}(\hat \vx, \hat \vy)$, we then have
\begin{equation}
\begin{split}
   0 \leq \hat \delta^T \leq M \cdot  G_l \cdot \eta^{T / 2}  + G_y d_{\max} \cdot (1 + H_{xy}  + L_x H_{yy} ) \cdot  {\color{red} (T + 1)  \cdot \eta^{T / 2}},
   \label{eq:monopoly-rate}
\end{split}
\end{equation}
where $M = 2 G_y L_x  + G_{xy} d_{\max}  + 2 L_x G_{yy} d_{\max} +  G_y H_x$.
\end{theorem}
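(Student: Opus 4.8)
The plan is to establish the two lower bounds first and then the two upper bounds separately. The lower bounds $0 \le \bar\delta^T$ and $0 \le \hat\delta^T$ are immediate from Proposition \ref{prop:model-gap}: its conclusion $l^{(T)}(\hat\vx,\hat\vy) \le l(\vx^*,\vy^*) \le l^{(T)}(\bar\vx,\bar\vy)$, together with $l(\vx^*,\vy^*) = l^*(\vx^*)$ (because $\vy^* = y^*(\vx^*)$ is a fixed point of $h(\vx^*,\cdot)$), yields both statements, and Assumptions \ref{ass:lower}--\ref{ass:dynamics} guarantee via Proposition \ref{thm:convergence-strong} that Condition \eqref{eq:condition-h} holds so that Proposition \ref{prop:model-gap} applies. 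I record two facts used throughout: (i) $h(\vx,\cdot)$ is a contraction with factor $\eta^{1/2}$, so $\|h^{(T)}(\vx,\vy) - y^*(\vx)\| \le \eta^{T/2}\|\vy - y^*(\vx)\|$; and (ii) a standard sensitivity computation on $f(\vx,y^*(\vx))=0$, using $\gamma$-strong monotonicity with $\gamma=1$ (hence $\nabla_\vy f \succeq \mI$), gives $\|\partial y^*/\partial\vx\| \le L_x$, while $\|\nabla_\vy h\|\le\eta^{1/2}$ from Proposition \ref{thm:convergence-strong}.

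For the Cournot bound \eqref{eq:cournot-rate} the crucial observation is a collapse at the fixed point: since $\bar\vy \in \sY^*(\bar\vx)$, we have $h^{(T)}(\bar\vx,\bar\vy) = \bar\vy$ and therefore $l^{(T)}(\bar\vx,\bar\vy) = l(\bar\vx,\bar\vy) = l^*(\bar\vx)$, so $\bar\delta^T = l^*(\bar\vx) - l^*(\vx^*)$. It then remains to control $\|\bar\vx - \vx^*\|$. The first-order condition for $\bar\vx$ (Proposition \ref{prop:vi-cournot}) reads $\langle \nabla_\vx l^{(T)}(\bar\vx,\bar\vy), \vx-\bar\vx\rangle \ge 0$, while $\vx^*$ satisfies $\langle \nabla l^*(\vx^*), \vx - \vx^*\rangle \ge 0$; combining these two variational inequalities with the $\mu$-strong monotonicity of $\nabla l^*$ ($\mu=1$) gives $\|\bar\vx - \vx^*\| \le \|\nabla l^*(\bar\vx) - \nabla_\vx l^{(T)}(\bar\vx,\bar\vy)\|$. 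Since the $\nabla_\vx l$ parts cancel at the fixed point, the right-hand side equals $\|(\partial y^*/\partial\vx - \partial h^{(T)}/\partial\vx)\,\nabla_\vy l\|$ in the convention of Equation \eqref{eq:gradient}. The key algebraic step is the identity $\partial y^*/\partial\vx - \partial h^{(T)}/\partial\vx = (\partial y^*/\partial\vx)\,(\nabla_\vy h)^T$, obtained by telescoping the Neumann partial sum $\partial h^{(T)}/\partial\vx = \nabla_\vx h\sum_{k=0}^{T-1}(\nabla_\vy h)^k = (\partial y^*/\partial\vx)(\mI - (\nabla_\vy h)^T)$ against the full inverse in Proposition \ref{prop:implicit-differentiation}. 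Using fact (ii) and $\|\nabla_\vy l\|\le G_y$, this bounds the gradient error by $G_y L_x \eta^{T/2}$, and feeding it into $\bar\delta^T \le G_l\|\bar\vx-\vx^*\|$ (with $\|\nabla l^*\| \le G_x + L_x G_y = G_l$) delivers \eqref{eq:cournot-rate} exactly.

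For the monopoly bound \eqref{eq:monopoly-rate} the most economical route is direct: by fact (i), $|l^{(T)}(\hat\vx,\hat\vy) - l^*(\hat\vx)| \le G_y\|h^{(T)}(\hat\vx,\hat\vy) - y^*(\hat\vx)\| \le G_y d_{\max}\eta^{T/2}$, and since $\vx^*$ minimizes $l^*$ on $\sX$ we have $l^*(\hat\vx) \ge l^*(\vx^*)$, so $\hat\delta^T \le G_y d_{\max}\eta^{T/2}$, which already implies the stated (larger) estimate. To reproduce \eqref{eq:monopoly-rate} term-by-term, however, one instead argues through the joint first-order condition of Proposition \ref{prop:vi-monopoly} and converts a gradient gap into a value gap via strong convexity, exactly as in the Cournot case. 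The essential new difficulty is that now $\hat\vy \ne y^*(\hat\vx)$, so the surrogate Jacobian $\partial h^{(T)}/\partial\vx$ must be evaluated along the off-equilibrium trajectory $\vy_t = h^{(t)}(\hat\vx,\hat\vy)$ rather than at a fixed point, which breaks the clean telescoping identity used above.

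The main obstacle is precisely this perturbed-Jacobian estimate: bounding $\|\,\partial h^{(T)}/\partial\vx|_{(\hat\vx,\hat\vy)} - \partial y^*/\partial\vx\,\|$ before the iterates have converged. I would expand $\partial h^{(T)}/\partial\vx$ as a sum over the $T$ steps of products of $\nabla_\vx h$ and $\nabla_\vy h$ evaluated at the iterates $\vy_t$, and compare each factor to its fixed-point value using the second-order bounds $H_{xy}$ and $H_{yy}$ of Assumption \ref{ass:dynamics} together with $\|\vy_t - y^*(\hat\vx)\| \le \eta^{t/2} d_{\max}$. Summing these $T$ perturbation contributions, each of order $\eta^{T/2}$ once the contraction of the remaining factors is accounted for, is what produces the $(T+1)\eta^{T/2}$ factor and the coefficient $G_y d_{\max}(1 + H_{xy} + L_x H_{yy})$, while the fixed-point part contributes the $M\cdot G_l\cdot\eta^{T/2}$ term with $M = 2 G_y L_x + G_{xy} d_{\max} + 2 L_x G_{yy} d_{\max} + G_y H_x$. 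The delicate point is to organize this telescoping sum so that the geometric decay of the tail dominates its linear growth in $T$; the rest is bookkeeping with the Lipschitz and second-derivative constants of Assumptions \ref{ass:upper}--\ref{ass:dynamics}.
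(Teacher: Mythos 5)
Your argument is correct, and it splits into two halves of different character. For the Cournot bound you follow essentially the paper's own route: the collapse $l^{(T)}(\bar\vx,\bar\vy)=l^*(\bar\vx)$ at the fixed point, the strong-convexity/VI combination giving $\|\bar\vx-\vx^*\|\le\|\nabla l^*(\bar\vx)-\nabla_{\vx}l^{(T)}(\bar\vx,\bar\vy)\|$, and the telescoping identity $\nabla y^*-\nabla_{\vx}h^{(T)}=\nabla y^*\cdot(\nabla_{\vy}h)^{T}$ are exactly the paper's Lemma \ref{prop:tired}, Part I of the proof, and Proposition \ref{prop:key-result} (the paper derives the identity by recursion rather than via the Neumann partial sum, but it is the same computation); one small caveat is that your justification of $\|\nabla y^*\|\le L_x/\gamma$ via implicit differentiation of ``$f(\vx,y^*(\vx))=0$'' only covers interior solutions, whereas the paper's Lemma \ref{prop:y-star-lip} obtains it from the VI directly. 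For the monopoly bound, your ``economical route'' is genuinely different from — and cleaner than — the paper's: since $\vx^*$ minimizes $l^*$ over $\sX$, $\hat\delta^T\le l^*(\hat\vx)-l^{(T)}(\hat\vx,\hat\vy)\le G_y\,\|y^*(\hat\vx)-h^{(T)}(\hat\vx,\hat\vy)\|\le G_y d_{\max}\eta^{T/2}$, which is a strictly tighter estimate (no $(T+1)$ factor, no $M\cdot G_l$ term) and, notably, does not even use the strong convexity of $l^*$. The paper instead routes everything through $\|\hat\vx-\vx^*\|$, which forces the off-equilibrium Jacobian perturbation estimate of Lemma \ref{prop:key-again} and produces the $(T+1)\eta^{T/2}$ growth; that longer route buys a bound on the solution distance $\|\hat\vx-\vx^*\|$ itself, but it is not needed for the value gap the theorem actually asserts. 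Your concluding sketch of how to reproduce the paper's constants term-by-term correctly identifies the perturbed-Jacobian lemma as the crux and the mechanism ($H_{xy}$, $H_{yy}$ controlling the drift of each factor along the trajectory, summed over $T$ terms each of order $\eta^{T/2}$), so nothing essential is missing.
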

\begin{proof}
Proposition \ref{prop:model-gap} directly guarantees $\bar \delta^T \geq 0$ and $\hat \delta^T \geq 0$; the remaining proof is given in Appendix \ref{app:gap}.    
\end{proof}

Based on Theorem \ref{thm:main}, we may develop the following simple procedure to obtain an approximated feasible solution to Problem \eqref{eq:bilevel}. (1) Select an appropriate $T \in \sN$. (2) Solve the two VI problems \eqref{eq:cournot-vi} and \eqref{eq:monopoly-vi} to obtain $(\bar \vx, \bar \vy)$ and $(\bar \vx, \bar \vy)$, respectively, and then calculate $\delta^T = l^{(T)}(\bar \vx, \bar \vy) - l^{(T)}(\hat \vx, \hat \vy)$. (3) If $\delta^T$ is small enough, accept $(\bar \vx, \bar \vy)$ as an approximated feasible solution; otherwise, increase $T$ and return to Step (2). We expect the above procedure to converge quickly, as Theorem \ref{thm:main} guarantees the gap decreases at an exponential rate as $T$ increases.  %

\subsection{Solution algorithms}
\label{sec:algorithm}

The two VI problems, \eqref{eq:cournot-vi} and \eqref{eq:monopoly-vi}, can be solved using the projection method, see Algorithms \ref{alg:cournot} and \ref{alg:monopoly} respectively, for 
the pseudocodes.
 
\begin{algorithm}[H]
   \caption{\small Solving $T$-step Cournot game. Input: $\vx^0 \in \sX$, $\vy^0 \in \sY$, step size $\alpha$. Output: $\bar \vx \in \sX$ and $\bar \vy \in \sY$.}
   \label{alg:cournot}
\begin{algorithmic}[1]
{\small
   \FOR{$t = 0, 1, \ldots$}
   \STATE Calculate $l_{\vx} = \nabla_{\vx} l^{(T)}(\vx^t, \vy^t)$.
    \STATE Set $\displaystyle \vx^{t + 1} = \argmin_{\vx \in \sX}~\alpha \cdot \langle l_{\vx}, \vx - \vx^{t} \rangle + \|\vx - \vx^t\|_2$ and $\vy^{t + 1} = h^{(T)}(\vx^{t}, \vy^t)$.
    \STATE After convergence, break and return $(\bar \vx, \bar \vy) = (\vx^t, \vy^t)$.
   \ENDFOR
}
\end{algorithmic}
\end{algorithm}
\vspace{-15pt}
\begin{algorithm}[H]
   \caption{\small Solving $T$-step monopoly model. Input: $\vx^0 \in \sX$, $\vy^0 \in \sY$, step sizes $\alpha$ and $\beta$. Output: $\hat \vx \in \sX$ and $\hat \vy \in \sY$.}
   \label{alg:monopoly}
\begin{algorithmic}[1]
 {\small
   \FOR{$t = 0, 1, \ldots$}
   \STATE Calculate $l_{\vx} = \nabla_{\vx} l^{(T)}(\vx^t, \vy^t)$ and $l_{\vy} = \nabla_{\vy} l^{(T)}(\vx^t, \vy^t)$.
    \STATE Set $\displaystyle \vx^{t + 1} = \argmin_{\vx \in \sX}~\alpha \cdot \langle l_{\vx}, \vx - \vx^{t} \rangle + \|\vx - \vx^t\|_2$ and
    $\displaystyle
            \vy^{t + 1} = \argmin_{\vy \in \sY}~\beta \cdot \langle l_{\vy}, \vy - \vy^{t} \rangle + \|\vy - \vy^t\|_2.
    $
    \label{ln:monopoly-lower}
     \STATE After convergence, break and return $(\hat \vx, \hat \vy) = (\vx^t, \vy^t)$.
   \ENDFOR
    }
\end{algorithmic}
\end{algorithm}

In the two algorithms, the leader and the followers evolve together, meaning they each update strategies simultaneously in every step of a shared evolution process. 
Hence, they break the bilevel hierarchy and turn the solution process into a single loop. At each iteration of that single loop, we need to calculate the gradients of $l^{(T)}(\vx, \vy) = l(\vx, h^{(T)}(\vx, \vy))$, which has an explicit expression. Hence, we may directly calculate its gradient via AD. To this end, we first need to feed the function $h(\vx, \vy)$ as a differentiable layer into differentiable-programming frameworks, e.g., TensorFlow \citep{abadi2016tensorflow} or PyTorch \citep{paszke2019pytorch}. As discussed earlier, using the package \texttt{cvxpylayers} to code $h(\vx, \vy)$ is always an option. In many cases, however, $h(\vx, \vy)$ can be directly coded as a differentiable program built by elementary operations only (no differentiable optimization solver is involved). In such cases, AD guarantees the complexity of calculating the gradient of $l^{(T)}(\vx, \vy)$ can be tightly bounded by that of calculating $l^{(T)}(\vx, \vy)$ itself. We refer the readers to Appendix \ref{app:dp-gradient} for more details.

\textbf{Summary.} To summarize, our main result is that Problem \eqref{eq:bilevel} can be tightly and efficiently bounded by two easier problems,  which can be solved by AD-based first-order methods. The proposed approximation scheme thus promises to significantly reduce the computational overhead for obtaining high-quality local solutions to Problem \eqref{eq:bilevel}. 

\textbf{Comparison.} The proposed framework bridges two general classes of schemes for approximating bilevel programs proposed under different contexts.  The first class includes several classic heuristics for solving Stackelberg congestion games (SCGs) (see Section \ref{sec:stackelberg}) dated back to the 1970s. For example, \citet{dantzig1979formulating} proposed to solve an SCG by assuming the followers work cooperatively with the leader to achieve the system-optimal state, which essentially turns the leader into a ``dictator". Another is \citet{tan1979hybrid}'s algorithm that finds the best response of the leader and followers iteratively while holding each other's decisions as a fixed input so that the leader and the followers are competing ``\`a la Cournot". Our scheme extends the behavior assumptions underlying these two.

The second class is AD-based approximation schemes. The proposed algorithms are similar to one-stage AD (see Section \ref{sec:ad}) that are originally motivated by ML applications, e.g., neural architecture search \citep{liu2018darts}. One-stage AD and our algorithm are structurally similar;  the difference lies in how the lower-level solution (the followers' decision) is updated. In our scheme, the follower may move $T$ steps according to either their own interest ($T$-step Cournot) or the leader's mandate ($T$-step monopoly). The former may be viewed as a natural extension of one-stage AD; the latter, however, represents a novel behavior interpretation. With this interpretation, our scheme yields adjustable upper and lower bounds on the original problem.

By bringing together the  two classes of approximation schemes motivated by different applications from different disciplines, our work provides new insights for both. 
On the one hand, it shows that classical game-theoretic approximations for bilevel programs can be substantially refined with modest computational efforts, using ideas borrowed from the ML community. On the one hand, it provides a theoretical justification to AD-based methods from the lens of game theory.

\textbf{Extensions.} We leave the discussion of several extensions to the appendix. In Appendix \ref{app:other-h}, we discuss other choices of the form of $h(\vx, \vy)$ to formulate two proposed models. In Appendix \ref{app:global}, we discuss how to search for the global solution efficiently when the global convexity assumption on $l^*(\vx)$ is relaxed. In Appendix \ref{sec:non-unique}, we address the case when the lower-level VI admits multiple solutions by developing a heuristic based on Algorithms \ref{alg:cournot} and \ref{alg:monopoly}. 

\section{Numerical Examples}
\label{sec:experiment}

To validate our analytical insights, we test our framework on the Stackelberg duopoly (cf. Example \ref{eg:duopoly}). We solve the $T$-step Cournot duopoly \eqref{eq:cournot-duopoly} and the $T$-step monopoly \eqref{eq:monopoly-duopoly} via Algorithms \ref{alg:cournot} and \ref{alg:monopoly}, respectively, with $r = 0.4$ and $T = 0, \ldots, 4$. Firm A's profits are then reported in Table \ref{tab:duopoly}, which indicates that, as $T$ increases, Firm A's optimal profit generated by either new model quickly converges to that by the Stackelberg duopoly. The decreasing rate also meets the expectation set by Theorem \ref{thm:main}.

\begin{table}[ht]
  \centering
  \caption{Firm A's profit in $T$-step Cournot duopoly and $T$-step monopoly models with different $T$s.}
  \label{tab:duopoly}%
    \centering
  \footnotesize
  \vspace{2pt}
    \begin{tabular}{cccccc}
    \toprule
    $T$     & 0     & 1     & 2     & 3     & 4\\
    \midrule
    $T$-step Cournot & 0.111 & 0.124 & 0.125 & 0.125 & 0.125 \\
    $T$-step monopoly & 0.250 & 0.150 & 0.130 & 0.126 & 0.125\\
    \bottomrule
    \end{tabular}%
\end{table}%

The results of other experiments are reported in Appendix \ref{app:experiments}. These additional experiments are designed to test the proposed scheme on larger and harder problems that arise from diverse applications and to compare it against benchmark bilevel algorithms.

\section{Conclusion}
\label{sec:conclusion}

It is well known that non-cooperative games may lead to inefficient outcomes. Caused by the lack of cooperation between self-interested agents, this loss of efficiency --- also known as the price of anarchy --- is best illustrated by the Braess paradox \citep{braess1968paradoxon} in transportation. 
Nevertheless, \citet{roughgarden2002bad} proves that the total travel time experienced by travelers at \textit{user equilibrium} (UE) is tightly bounded from the above by that  achieved when they are fully cooperative, a state called system optimum (SO). Evidently, the UE state corresponds to the outcome of a \textit{Cournot game} because everyone competes equally, whereas the SO state can be brought about only if the choices of all travelers are \textit{monopolized}. 
The finding in this paper indicates that the gap between ``Cournot" and ``monopolized" states can not only  be bounded but also be \textit{narrowed} by simultaneously giving the leader a limited ability of anticipation in the Cournot game and the followers  limited ``freedom"  to pursue their own interests in the monopoly model. Moreover, given the right conditions, they both can converge to the outcome of a Stackelberg game, where a compromise is arranged: the leader cannot dictate the followers' choice but can influence it indirectly; the followers enjoy the freedom of choice but must heed the leader's guidance. 

The models proposed as approximations of the Stackelberg game (bilevel program) are solved by AD-based methods. This connects our work to many bilevel programming algorithms developed in the ML literature \citep{liu2021investigating}. Our theoretical results help answer a question that has been extensively debated recently: when and why can AD-based approximation schemes deliver satisfactory solutions? 
Our work contributes to this debate by revealing that --- besides the power of AD itself \citep{ablin2020super} --- the underlying game-theoretic structure plays an important role in shaping the power of these methods.  We hope this finding will inspire more interdisciplinary works across the domains that count bilevel programming in their toolbox.

\appendix
\numberwithin{algorithm}{section}
\numberwithin{equation}{section}
\numberwithin{figure}{section}
\numberwithin{table}{section}

\section{Proofs in Section \ref{sec:main-results}}

\subsection{Proof of Proposition \ref{prop:model-gap}}

\begin{proof}[Proof of Proposition \ref{prop:model-gap}]
First, $(\bar \vx, \bar \vy)$ is a feasible solution to Problem \eqref{eq:bilevel} since $(\bar \vx, \bar \vy) \in \{(\vx, \vy): \vx \in \sX, \vy \in \sY^*(\vx) \}$. Thus, we can directly conclude
$
    l(\vx^*, \vy^*) \leq l(\bar \vx, \bar \vy)  = l^{(T)}(\bar \vx, \bar \vy).
$
Next, as $l(\vx, \vy) = l(\vx, h^{(T)}(\vx, \vy)) = l^{(T)}(\vx, \vy)$ for all $ (\vx, \vy) \in \{(\vx, \vy): \vx \in \sX, \vy \in \sY^*(\vx) \}$, we claim $(\vx^*, \vy^*)$ also solves the following optimization
\begin{equation}
     \vx^*, \vy^* \in \argmin_{\vx \in \sX,\, \vy \in \sY^*(\vx).}~l^{(T)}(\vx, \vy).
\end{equation}
Eventually, $\{(\vx, \vy): \vx \in \sX, \vy \in \sY^*(\vx) \} \subseteq \sX \times \sY$ implies that $l^{(T)}(\hat \vx, \hat \vy) \leq l^{(T)}(\vx^*, \vy^*) = l(\vx^*, \vy^*)$.
\end{proof}

\subsection{Proof of Theorem \ref{thm:main}}
\label{app:gap}

We first provide the following lemmas.

\begin{lemma}
\label{prop:tired}
If $l^*(\vx) = l(\vx, y^*(\vx))$ is $\mu$-strongly convex on $\vx \in \sX$, then we have
\begin{equation}
    \mu \cdot \|\vx^* - \vx\|_2^2 \leq  \langle \nabla l^*(\vx), \vx - \vx^* \rangle, \quad \forall \vx \in  \sX.
    \label{eq:add2}
\end{equation}
\end{lemma}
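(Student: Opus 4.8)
The plan is to derive \eqref{eq:add2} from the first-order characterization of $\mu$-strong convexity together with the optimality condition satisfied by $\vx^*$ over the convex set $\sX$ (convex by Assumption \ref{ass:upper}). Since $y^*(\vx)$ is differentiable by Proposition \ref{prop:implicit-differentiation}, the composite $l^*(\vx) = l(\vx, y^*(\vx))$ is differentiable, so $\mu$-strong convexity yields the standard lower bound
\begin{equation*}
    l^*(\vx') \geq l^*(\vx) + \langle \nabla l^*(\vx), \vx' - \vx \rangle + \frac{\mu}{2}\|\vx' - \vx\|_2^2, \quad \forall \vx, \vx' \in \sX.
\end{equation*}

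First I would record two consequences. Setting $\vx' = \vx^*$ and rearranging gives the gradient inequality $\langle \nabla l^*(\vx), \vx - \vx^* \rangle \geq l^*(\vx) - l^*(\vx^*) + \frac{\mu}{2}\|\vx - \vx^*\|_2^2$. Separately, applying the same lower bound with base point $\vx^*$ and target $\vx$, and invoking the first-order optimality condition $\langle \nabla l^*(\vx^*), \vx - \vx^* \rangle \geq 0$ (valid because $\vx^*$ minimizes $l^*$ over the convex set $\sX$), yields the quadratic-growth estimate $l^*(\vx) - l^*(\vx^*) \geq \frac{\mu}{2}\|\vx - \vx^*\|_2^2$.

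Then I would substitute the quadratic-growth estimate into the gradient inequality:
\begin{equation*}
    \langle \nabla l^*(\vx), \vx - \vx^* \rangle \geq \big(l^*(\vx) - l^*(\vx^*)\big) + \frac{\mu}{2}\|\vx - \vx^*\|_2^2 \geq \frac{\mu}{2}\|\vx - \vx^*\|_2^2 + \frac{\mu}{2}\|\vx - \vx^*\|_2^2 = \mu \|\vx - \vx^*\|_2^2,
\end{equation*}
which is exactly \eqref{eq:add2}.

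The one point I would watch most carefully --- and the only place a careless argument goes wrong --- is recovering the full constant $\mu$ rather than $\mu/2$. Using the gradient lower bound together only with the crude fact $l^*(\vx) \geq l^*(\vx^*)$ loses a factor of two; the remedy is to invoke strong convexity a second time, through the quadratic-growth bound, which in turn rests on the constrained optimality condition for $\vx^*$. An equivalent route derives the $\mu$-strong monotonicity of $\nabla l^*$ by adding the strong-convexity bound to its twin (with the roles of $\vx$ and $\vx^*$ swapped) and then applies the optimality condition. Either way the argument is routine convex analysis, with no genuine obstacle beyond bookkeeping the constant.
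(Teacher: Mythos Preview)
Your proof is correct and essentially the same as the paper's. The paper adds the two strong-convexity inequalities (with the roles of $\vx$ and $\vx^*$ swapped) and then applies the optimality condition $\langle \nabla l^*(\vx^*), \vx - \vx^* \rangle \geq 0$ --- exactly the ``equivalent route'' you describe at the end --- while your main presentation routes through the quadratic-growth bound first; the ingredients and constants match, only the bookkeeping differs.
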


\begin{proof}
As $l^*(\vx)$ is $\mu$-strongly convex, we have
\begin{equation}
    \begin{cases}
    \displaystyle l^*(\vx^*) \geq l^*( \vx) + \langle \nabla l^*( \vx), \vx^* -  \vx \rangle + \frac{\mu}{2} \cdot \|\vx^* -  \vx\|_2^2, \\[5pt]
    \displaystyle l^*( \vx) \geq l^*(\vx^*) + \langle \nabla l^*(\vx^*),  \vx - \vx^* \rangle + \frac{\mu}{2} \cdot \|\vx^* -  \vx\|_2^2.
    \end{cases}
\end{equation}
Adding these two equations, we then have
\begin{equation}
    0 \geq \langle \nabla l^*( \vx), \vx^* -  \vx \rangle + \langle \nabla l^*( \vx^*),  \vx - \vx^* \rangle + \mu \cdot \|\vx^* -  \vx\|_2^2 \geq  \langle \nabla l^*( \vx), \vx^* -  \vx \rangle + \mu \cdot \|\vx^* -  \vx\|_2^2,
\end{equation}
where the second inequality comes from the fact that $\vx^*$ minimizes $l^*(\vx)$.
\end{proof}

\begin{lemma}
\label{prop:y-star-lip}
For all $\vx \in \sX$, we have
$
    \|\nabla y^*(\vx)\|_2 \leq L_x / \gamma.
$
\end{lemma}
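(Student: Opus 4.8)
The plan is to establish the bound through a Lipschitz estimate on the solution map $y^*(\cdot)$ rather than through the implicit-differentiation formula of Proposition \ref{prop:implicit-differentiation}, which would only deliver a bound in terms of the derivatives of $h$ (i.e.\ $H_x$ and the resolvent $(\mI - \nabla_{\vy} h)^{-1}$). Concretely, I would fix two points $\vx_1, \vx_2 \in \sX$ with corresponding unique lower-level solutions $\vy_1^* = y^*(\vx_1)$ and $\vy_2^* = y^*(\vx_2)$, each characterized by its defining variational inequality $\langle f(\vx_i, \vy_i^*), \vy - \vy_i^* \rangle \geq 0$ for all $\vy \in \sY$.

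First I would test the VI for $\vx_1$ at the point $\vy = \vy_2^*$ and the VI for $\vx_2$ at $\vy = \vy_1^*$, then add the two resulting inequalities to obtain $\langle f(\vx_1, \vy_1^*) - f(\vx_2, \vy_2^*),\, \vy_2^* - \vy_1^* \rangle \geq 0$. Next I would insert the term $\pm f(\vx_2, \vy_1^*)$ to split the left-hand side into a ``monotonicity part'' $\langle f(\vx_2, \vy_1^*) - f(\vx_2, \vy_2^*),\, \vy_2^* - \vy_1^* \rangle$ and a ``perturbation part'' $\langle f(\vx_1, \vy_1^*) - f(\vx_2, \vy_1^*),\, \vy_2^* - \vy_1^* \rangle$. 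Applying the $\gamma$-strong monotonicity of $f(\vx_2, \cdot)$ from Assumption \ref{ass:lower} bounds the former above by $-\gamma \|\vy_1^* - \vy_2^*\|_2^2$, so that rearranging yields $\gamma \|\vy_1^* - \vy_2^*\|_2^2 \leq \langle f(\vx_1, \vy_1^*) - f(\vx_2, \vy_1^*),\, \vy_2^* - \vy_1^* \rangle$.

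Then I would apply Cauchy--Schwarz on the right-hand side and invoke the Lipschitz-in-$\vx$ property implied by $\|\nabla_{\vx} f\|_2 \leq L_x$, namely $\|f(\vx_1, \vy_1^*) - f(\vx_2, \vy_1^*)\|_2 \leq L_x \|\vx_1 - \vx_2\|_2$. Dividing through by $\|\vy_1^* - \vy_2^*\|_2$ (the case $\vy_1^* = \vy_2^*$ being trivial) gives the Lipschitz estimate $\|y^*(\vx_1) - y^*(\vx_2)\|_2 \leq (L_x/\gamma)\,\|\vx_1 - \vx_2\|_2$. Finally, since $y^*$ is differentiable by Proposition \ref{prop:implicit-differentiation}, this global Lipschitz constant controls the operator norm of its Jacobian: taking directional derivatives along arbitrary unit vectors and passing to the limit yields $\|\nabla y^*(\vx)\|_2 \leq L_x/\gamma$ at every $\vx \in \sX$.

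The argument is essentially routine once the cross test-point substitution is set up; the only point requiring genuine care is the decomposition step, where one must keep the signs of the inequalities consistent so that strong monotonicity and the $\vx$-Lipschitz bound combine in the correct direction. I expect no real obstacle, but it is worth emphasizing that avoiding the implicit-function formula is precisely what produces the clean constant $L_x/\gamma$ rather than a looser expression involving the derivatives of $h$.
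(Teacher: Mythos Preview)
Your proposal is correct and follows essentially the same approach as the paper: cross-testing the two VIs, adding them, inserting an intermediate term to separate the strong-monotonicity contribution from the $\vx$-Lipschitz contribution, applying Cauchy--Schwarz, and then passing to the limit to bound the Jacobian. The only cosmetic difference is that the paper inserts $\pm f(\vx, \vy^{\prime *})$ (using strong monotonicity of $f(\vx,\cdot)$) whereas you insert $\pm f(\vx_2, \vy_1^*)$ (using strong monotonicity of $f(\vx_2,\cdot)$); both decompositions are equivalent and yield the identical constant $L_x/\gamma$.
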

\begin{proof}
Let $\vx$ and $\vx'$ be two arbitrary points in $\sX$. Denote $\vy^* = y^*(\vx)$ and $\vy^{\prime *} = y^*(\vx')$. Then we have
\begin{equation}
    \begin{cases}
    \langle f(\vx, \vy^*), \vy^{\prime *} - \vy^* \rangle \geq 0, \\[2pt]
    \langle f(\vx', \vy^{\prime *}), \vy^* - \vy^{\prime *} \rangle \geq 0.
    \end{cases}
\end{equation}
Adding these two inequalities, we then have
\begin{equation}
    \langle f(\vx', \vy^{\prime *}) - f(\vx, \vy^{\prime *}) + f(\vx, \vy^{\prime *}) - f(\vx, \vy^*), \vy^* - \vy^{\prime *} \rangle \geq 0.
    \label{eq:add-two-again}
\end{equation}
We can further obtain that
\begin{equation}
\begin{split}
    \langle f(\vx', \vy^{\prime *}) - f(\vx, \vy^{\prime *}), \vy^* - \vy^{\prime *} \rangle \geq \langle f(\vx, \vy^*) - f(\vx, \vy^{\prime *}), \vy^* - \vy^{\prime *} \rangle \geq \gamma \cdot  \|\vy^* - \vy^{\prime *}\|_2^2,
\end{split}
\end{equation}
where the first inequality comes from \eqref{eq:add-two-again} and the second inequality comes from the assumption that $f$ is strongly monotone with respect to $\|\cdot\|_2$.
Noting that $f(\cdot, \vy)$ is $L_x$-Lipschitz continuous and using the Cauchy-Schwartz inequality, we eventually have
\begin{equation}
    \|\vy^* - \vy^{\prime *}\|_2 \leq \frac{1}{\gamma} \cdot  \| f(\vx', \vy^{\prime *}) - f(\vx, \vy^{\prime *})\|_2 \leq \frac{L_x}{\gamma} \cdot  \|\vx - \vx'\|_2.
\end{equation}
Letting $\vx' \to \vx$, we then conclude the proof.  
\end{proof}

\begin{lemma}
\label{prop:l-star-lip}
The function $l^*(\vx)$ is $(G_x + L_x G_y / \gamma)$-Lipschitz continuous with respect to $\|\cdot\|_2$.
\end{lemma}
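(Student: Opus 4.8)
The plan is to bound the gradient of $l^*$ uniformly over $\sX$ and then invoke convexity of $\sX$ to convert that gradient bound into the claimed Lipschitz constant. First I would observe that, under Assumptions \ref{ass:upper}--\ref{ass:dynamics}, Proposition \ref{prop:implicit-differentiation} guarantees the implicit map $y^*(\vx)$ is continuously differentiable, so $l^*(\vx) = l(\vx, y^*(\vx))$ is differentiable and the chain rule (cf. Equation \eqref{eq:gradient}) gives
\begin{equation*}
    \nabla l^*(\vx) = \nabla_{\vx} l(\vx, y^*(\vx)) + \frac{\partial y^*}{\partial \vx} \cdot \nabla_{\vy} l(\vx, y^*(\vx)).
\end{equation*}

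Next I would apply the triangle inequality together with submultiplicativity of the operator norm to obtain $\|\nabla l^*(\vx)\|_2 \leq \|\nabla_{\vx} l\|_2 + \|\partial y^*/\partial \vx\|_2 \cdot \|\nabla_{\vy} l\|_2$. The three factors are each controlled directly: the bounds $\|\nabla_{\vx} l\|_2 \leq G_x$ and $\|\nabla_{\vy} l\|_2 \leq G_y$ come from Assumption \ref{ass:upper}, while Lemma \ref{prop:y-star-lip} supplies $\|\partial y^*/\partial \vx\|_2 \leq L_x/\gamma$. Combining these yields the pointwise estimate $\|\nabla l^*(\vx)\|_2 \leq G_x + L_x G_y/\gamma$ for every $\vx \in \sX$.

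Finally, since $\sX$ is convex (Assumption \ref{ass:upper}), a uniform bound on $\|\nabla l^*\|_2$ upgrades to the same global Lipschitz constant: for any $\vx, \vx' \in \sX$ the segment joining them lies in $\sX$, so integrating $\nabla l^*$ along that segment (equivalently, applying the mean value inequality) gives $|l^*(\vx) - l^*(\vx')| \leq (G_x + L_x G_y/\gamma)\,\|\vx - \vx'\|_2$, which is exactly the assertion.

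I do not anticipate a serious obstacle here; the argument is a routine chain-rule-plus-norm-bound. The only points requiring care are (i) invoking Proposition \ref{prop:implicit-differentiation} to justify differentiability of $y^*$, which in turn rests on the strong-monotonicity hypothesis in Assumption \ref{ass:lower}, and (ii) using convexity of $\sX$ to pass from a pointwise gradient bound to a global Lipschitz estimate. Both are already secured by the standing assumptions, so the substantive content of the proof is simply the bookkeeping of the three norm bounds, with the previously established Lemma \ref{prop:y-star-lip} doing the real work.
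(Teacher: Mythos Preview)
Your proposal is correct and follows essentially the same approach as the paper: compute $\nabla l^*(\vx)$ via the chain rule, bound its norm by the triangle inequality together with the bounds $G_x$, $G_y$ from Assumption~\ref{ass:upper} and $\|\nabla y^*(\vx)\|_2 \le L_x/\gamma$ from Lemma~\ref{prop:y-star-lip}, and conclude Lipschitz continuity. Your write-up is in fact more careful than the paper's, since you make explicit both the invocation of Proposition~\ref{prop:implicit-differentiation} for differentiability of $y^*$ and the use of convexity of $\sX$ in passing from the pointwise gradient bound to the global Lipschitz estimate; the paper leaves both of these implicit.
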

\begin{proof}
Noting that $l(\vx, \cdot)$, $l(\cdot, \vy)$ and $y^*(\vx)$ are respectively $G_y$-, $G_x$- and $L_x / \gamma$-Lipschitz continuous, by directly applying the triangle inequality of norms, we can obtain that
\begin{equation}
\begin{split}
    \|\nabla l^*(\vx)\| &= \|\nabla_{\vx} l(\vx, y^*(\vx)) + \nabla y^*(\vx) \cdot \nabla_{\vy} l(\vx, y^*(\vx)) \|\\
    &\leq \| \nabla_{\vx} l(\vx, y^*(\vx))\| + \|\nabla y^*(\vx)\| \cdot \|\nabla_{\vy} l(\vx, y^*(\vx))\| \leq G_x + \frac{L_x G_y}{\gamma}.
\end{split}
\end{equation}
We thus conclude the proof.
\end{proof}

\begin{lemma}
We can directly obtain the following formulas by applying the chain rule
\begin{align}
    \nabla l^*(\vx) &= \nabla_{\vx} l(\vx, y^*(\vx)) + \nabla y^*(\vx) \cdot \nabla_{\vy} l(\vx, y^*(\vx)), \label{eq:nabla-l-star} \\
    \nabla_{\vx} l^{(T)}(\vx, \vy) &= \nabla_{\vx} l(\vx, h^{(T)}(\vx, \vy)) + \nabla_{\vx} h^{(T)}(\vx, \vy) \cdot \nabla_{\vy} l(\vx, h^{(T)}(\vx, \vy)), \label{eq:nabla-l-T}\\
    \nabla_{\vx} h^{(t)}(\vx, \vy) &= \nabla_{\vx} h( \vx, h^{(t - 1)}(\vx, \vy)) + \nabla_{\vx} h^{(t - 1)}( \vx,  \vy) \cdot \nabla_{\vy} h( \vx,  h^{(t - 1)}(\vx, \vy)), \label{eq:nabla-h-T} \\
    \nabla_{\vy} h^{(t)}(\vx, \vy) &= \nabla_{\vy} h(\vx, h^{(t - 1)}(\vx, \vy)) \cdot \nabla_{\vy} h^{(t - 1)}(\vx, \vy). \label{eq:nabla-h-T-y}
\end{align}
\end{lemma}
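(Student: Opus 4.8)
All four identities are instances of the multivariate chain rule, so the plan is to fix the differentiability hypotheses and the orientation convention once and then read each formula off the appropriate composition. First I would record the prerequisites: Proposition~\ref{prop:implicit-differentiation} supplies continuous differentiability of $y^*(\vx)$ under Assumption~\ref{ass:lower}, and Assumption~\ref{ass:dynamics} makes $h$ twice continuously differentiable, so every composition below is differentiable and the chain rule applies. I would also fix the orientation convention already in force in the paper (cf. Equation~\eqref{eq:gradient} and Lemma~\ref{prop:l-star-lip}), under which each Jacobian block is transposed as needed so that the displayed matrix products are conformable.

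Next I would handle the two scalar compositions. For Equation~\eqref{eq:nabla-l-star} I view $l^*(\vx) = l(\vx, y^*(\vx))$ as a map in which $\vx$ enters $l$ both directly and through $y^*(\vx)$; differentiating totally in $\vx$ gives the direct term $\nabla_{\vx} l(\vx, y^*(\vx))$ plus the indirect term $\nabla y^*(\vx) \cdot \nabla_{\vy} l(\vx, y^*(\vx))$, which is the claim. Equation~\eqref{eq:nabla-l-T} is the identical computation applied to $l^{(T)}(\vx, \vy) = l(\vx, h^{(T)}(\vx, \vy))$, differentiating only in $\vx$ with $\vy$ held fixed: the first slot of $l$ contributes $\nabla_{\vx} l(\vx, h^{(T)}(\vx, \vy))$ and the second slot contributes $\nabla_{\vx} h^{(T)}(\vx, \vy) \cdot \nabla_{\vy} l(\vx, h^{(T)}(\vx, \vy))$.

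I would then establish Equations~\eqref{eq:nabla-h-T} and~\eqref{eq:nabla-h-T-y} by induction on $t$ using the recursion $h^{(t)}(\vx, \vy) = h(\vx, h^{(t-1)}(\vx, \vy))$. The base case $t = 0$ follows from $h^{(0)}(\vx, \vy) = \vy$, giving $\nabla_{\vx} h^{(0)} = \vzero$ and $\nabla_{\vy} h^{(0)} = \mI$. For the inductive step I assume $h^{(t-1)}$ is continuously differentiable, so $h^{(t)}$ is a composition of continuously differentiable maps and hence continuously differentiable; differentiating the recursion in $\vx$ splits into the direct first-slot term $\nabla_{\vx} h(\vx, h^{(t-1)}(\vx, \vy))$ and the second-slot term $\nabla_{\vx} h^{(t-1)}(\vx, \vy) \cdot \nabla_{\vy} h(\vx, h^{(t-1)}(\vx, \vy))$, yielding~\eqref{eq:nabla-h-T}, while differentiating in $\vy$ --- which reaches $h$ only through its second slot --- leaves the single product of~\eqref{eq:nabla-h-T-y}. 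The induction simultaneously certifies that $h^{(t)}$ is differentiable for every $t$.

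I do not expect a genuine analytic obstacle; the statement is essentially a bookkeeping exercise in the chain rule. The only points that demand care are keeping the Jacobian orientation consistent so that the recursion~\eqref{eq:nabla-h-T} correctly feeds $\nabla_{\vx} h^{(t-1)}$ into the next step and the factors in~\eqref{eq:nabla-h-T-y} stay conformable, and verifying that the differentiability invoked from Proposition~\ref{prop:implicit-differentiation} and Assumption~\ref{ass:dynamics} truly holds under Assumptions~\ref{ass:upper}--\ref{ass:dynamics}.
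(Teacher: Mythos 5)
Your proposal is correct and coincides with the paper's (implicit) argument: the paper states this lemma without proof, treating all four identities as immediate consequences of the chain rule applied to the compositions $l(\vx, y^*(\vx))$, $l(\vx, h^{(T)}(\vx,\vy))$, and the recursion $h^{(t)}(\vx,\vy) = h(\vx, h^{(t-1)}(\vx,\vy))$, which is exactly what you carry out, with the induction on $t$ (base case $\nabla_{\vx} h^{(0)} = \vzero$, $\nabla_{\vy} h^{(0)} = \mI$) being routine bookkeeping the paper leaves unstated. Your attention to the transposed-Jacobian orientation is apt --- under any single fixed convention one of \eqref{eq:nabla-h-T} and \eqref{eq:nabla-h-T-y} has its factors in the reverse order as printed, but since these products enter the subsequent estimates only through submultiplicative norm bounds, this is immaterial downstream.
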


\begin{proposition}
\label{prop:key-result}
For any $\vx \in \sX$ and $\vy^* = y^*(\vx)$, we have
\begin{equation}
    \| \nabla y^*(\vx) - \nabla_{\vx} h^{(T)}(\vx, \vy^*)\|_2 \leq \frac{L_x}{\gamma} \cdot \eta^{T/2}.
\end{equation}
\end{proposition}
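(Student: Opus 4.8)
The plan is to exploit the fact that $\vy^* = y^*(\vx)$ is a fixed point of the map $h(\vx,\cdot)$, so that iterating $h$ from $\vy^*$ never leaves $\vy^*$: by Proposition \ref{prop:fixed-point} we have $h^{(t)}(\vx,\vy^*) = \vy^*$ for every $t \geq 0$. This single observation is what makes the argument collapse, because it forces every Jacobian of $h$ appearing in the recursion \eqref{eq:nabla-h-T} to be evaluated at the \emph{same} point $(\vx,\vy^*)$. Writing $A := \nabla_{\vx} h(\vx,\vy^*)$ and $B := \nabla_{\vy} h(\vx,\vy^*)$, the recursion \eqref{eq:nabla-h-T} therefore reduces to the linear recurrence $\nabla_{\vx} h^{(t)}(\vx,\vy^*) = A + \nabla_{\vx} h^{(t-1)}(\vx,\vy^*)\, B$, with initial condition $\nabla_{\vx} h^{(0)}(\vx,\vy^*) = 0$ (since $h^{(0)}(\vx,\vy)=\vy$ does not depend on $\vx$).

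Next I would bring in the implicit-differentiation identity. Proposition \ref{prop:implicit-differentiation} gives $\nabla y^*(\vx) = A(\mI - B)^{-1}$, which rearranges into the fixed-point form $\nabla y^*(\vx) = A + \nabla y^*(\vx)\, B$ — exactly the same shape as the recurrence above. Setting $\Delta_t := \nabla y^*(\vx) - \nabla_{\vx} h^{(t)}(\vx,\vy^*)$ and subtracting the two relations, the inhomogeneous term $A$ cancels and I obtain the clean one-step identity $\Delta_t = \Delta_{t-1}\, B$. Since $\Delta_0 = \nabla y^*(\vx)$, iterating yields
$$\Delta_T = \nabla y^*(\vx) \cdot B^T = \nabla y^*(\vx) \cdot \big(\nabla_{\vy} h(\vx,\vy^*)\big)^T.$$
Equivalently, one can read this tail off the Neumann series $(\mI-B)^{-1} = \sum_{k \ge 0} B^k$: truncating at $T$ discards exactly $A\sum_{k \ge T} B^k = A(\mI-B)^{-1}B^T = \nabla y^*(\vx)\,B^T$.

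Finally I would take operator norms. Submultiplicativity gives $\|\Delta_T\|_2 \le \|\nabla y^*(\vx)\|_2 \cdot \|B\|_2^T$. Lemma \ref{prop:y-star-lip} bounds the first factor by $L_x/\gamma$, and the contraction estimate of Proposition \ref{thm:convergence-strong} — which in the present normalization says that $h(\vx,\cdot)$ is $\eta^{1/2}$-Lipschitz, so that its Jacobian obeys $\|\nabla_{\vy} h(\vx,\vy)\|_2 \le \eta^{1/2}$ everywhere and in particular $\|B\|_2 \le \eta^{1/2}$ — bounds the second by $\eta^{T/2}$. Combining, $\|\Delta_T\|_2 \le (L_x/\gamma)\,\eta^{T/2}$, which is the claimed inequality.

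The main obstacle is conceptual rather than computational: one must keep the difference in the factored form $\nabla y^*(\vx)\,B^T$ rather than bounding $A$, $B^T$, and $(\mI-B)^{-1}$ separately. It is precisely this factorization (and the commutativity of $B^T$ with $(\mI-B)^{-1}$) that produces the sharp constant $L_x/\gamma$ via Lemma \ref{prop:y-star-lip}; the naive bound would instead leave a stray factor $\|(\mI-B)^{-1}\|_2$ and replace $L_x/\gamma$ by the weaker $H_x$. The only other point requiring care is verifying that the Lipschitz constant supplied by Proposition \ref{thm:convergence-strong} transfers to the Jacobian-norm bound $\|B\|_2 \le \eta^{1/2}$, which follows since the operator norm of a differentiable map's Jacobian is dominated by its Lipschitz constant.
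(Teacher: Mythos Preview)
Your proof is correct and follows essentially the same route as the paper's: both derive the exact identity $\nabla y^*(\vx) - \nabla_{\vx} h^{(T)}(\vx,\vy^*) = \nabla y^*(\vx)\cdot \nabla_{\vy} h^{(T)}(\vx,\vy^*)$ by subtracting the fixed-point relation $\nabla y^*(\vx) = \nabla_{\vx} h(\vx,\vy^*) + \nabla y^*(\vx)\,\nabla_{\vy} h(\vx,\vy^*)$ from the recursion \eqref{eq:nabla-h-T} (all Jacobians collapsing to the same point because $h^{(t)}(\vx,\vy^*)=\vy^*$), and then apply Lemma \ref{prop:y-star-lip} and the contraction bound of Proposition \ref{thm:convergence-strong}. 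Your added remarks on the Neumann-series interpretation and on why the factored form yields the sharp constant are helpful but do not constitute a different argument.
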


\begin{proof}
By reformulating Equation \eqref{eq:nabla-y-star} in Proposition \ref{thm:convergence-strong}, we have
\begin{equation}
        \nabla y^*(\vx) = \nabla_{\vx} h(\vx,  \vy^*) + \nabla y^*(\vx) \cdot \nabla_{\vy} h(\vx,  \vy^*).
        \label{eq:y-star-iter}
\end{equation}
Based on Equations \eqref{eq:nabla-h-T}, \eqref{eq:nabla-h-T-y}, and \eqref{eq:y-star-iter}, we can then iteratively obtain 
\begin{equation}
\begin{split}
    \nabla y^*( \vx) - \nabla_{\vx} h^{(T)}( \vx,   \vy^*) &= \left( \nabla y^*( \vx) - \nabla_{\vx} h^{(T - 1)}( \vx,   \vy^*) \right)\cdot \nabla_{\vy} h^{(1)}( \vx,  \vy^*) \\
    &= \left( \nabla y^*( \vx) - \nabla_{\vx} h^{(T - 2)}( \vx,  \vy^*) \right)\cdot \nabla_{\vy} h^{(2)}( \vx,  \vy^*) \\
    &= \cdots = \nabla y^*( \vx) \cdot \nabla_{\vy} h^{(T)}( \vx,  \vy^*).
\end{split}
\label{eq:middle-2}
\end{equation}
Using the triangle inequality of norms and Applying Lemma \ref{prop:y-star-lip}, we then obtain
\begin{equation}
    \| \nabla y^*(\vx) - \nabla_{\vx} h^{(T)}(\vx, \vy^*)\|_2 \leq \|\nabla y^*( \vx) \|_2 \cdot \|\nabla_{\vy} h^{(T)}( \vx,  \vy^*)\|_2 \leq \frac{L_x}{\gamma} \cdot \|\nabla_{\vy} h^{(T)}( \vx,  \vy^*)\|_2.
\end{equation}
We eventually conclude the proof by applying Proposition \ref{thm:convergence-strong}.
\end{proof}

The remaining proof will be decomposed into two parts,  Part I and Part II, in which we will prove the upper bounds given to $\bar \delta^T = l^{(T)}(\bar \vx, \bar \vy) - l^*(\vx)$ and $\hat \delta^T = l^*(\vx) - l^{(T)}(\hat \vx, \hat \vy)$, respectively.

\subsubsection{Part I of the Remaining Proof}
\label{app:gap-cournot}

By applying Lemma \ref{prop:l-star-lip}, we can obtain that
\begin{equation}
    l(\bar \vx, \bar \vy) - l^*(\vx^*) = l^*(\bar \vx) - l^*(\vx^*) \leq \left(G_x + \frac{L_x G_y}{\gamma}\right) \cdot \|\bar \vx - \vx^*\|_2
    \label{eq:cournot-first-step}
\end{equation}
Therefore, we only need to bound $\|\bar \vx- \vx^*\|_2$. Based on Proposition \ref{prop:vi-cournot}, we have
\begin{equation}
    \langle \nabla_{\vx} l^{(T)}(\bar \vx, \bar \vy), \vx^* - \bar \vx \rangle \geq 0.
\label{eq:exact-condition}
\end{equation}
Then we can set $\vx = \bar \vx$ in Equation \eqref{eq:add2} and combine it with Equation \eqref{eq:exact-condition}, which gives
\begin{equation}
    \mu \cdot \|\vx^* - \bar \vx\|_2^2 \leq \langle \nabla l^*(\bar \vx), \bar \vx  - \vx^*\rangle + \langle \nabla_{\vx} l^{(T)}(\bar \vx, \bar \vy), \vx^* - \bar \vx\rangle = \langle \nabla l^*(\bar \vx)  - \nabla_{\vx} l^{(T)}(\bar \vx, \bar \vy), \bar \vx  - \vx^*\rangle
\end{equation}
Using the Cauchy-Schwartz inequality, we then obtain
\begin{equation}
    \|\vx^* - \bar \vx\|_2 \leq \frac{1}{\mu}  \cdot \| \nabla l^*(\bar \vx)  - \nabla_{\vx} l^{(T)}(\bar \vx, \bar \vy)\|_2.
\end{equation}
Noting that $\bar \vy = y^*(\bar \vx) = h^{(T)}(\bar \vx, \bar \vy)$, substituting $\nabla l^*(\bar \vx)$ and $\nabla_{\vx} l^{(T)}(\bar \vx, \bar \vy)$ by Equations \eqref{eq:nabla-l-star} and \eqref{eq:nabla-l-T}, and then applying Proposition \ref{prop:key-result}, we eventually have
\begin{equation}
\begin{split}
    \|\vx^* - \bar \vx\|_2 &\leq \frac{1}{\mu} \cdot
    \| \nabla  y^*(\bar \vx) - \nabla_{\vx} h^{(T)}(\bar \vx, \bar \vy)\|_2 \cdot \|\nabla_{\vy} l(\bar \vx, \bar \vy)\|_2 \leq \frac{G_y L_x}{\mu \gamma} \cdot \eta^{T/2}.
    \label{eq:middle-1}
\end{split}
\end{equation}
We can hence conclude the proof by combining Equations \eqref{eq:cournot-first-step} and \eqref{eq:middle-1}.

\subsubsection{Part II of the Remaining Proof}
\label{app:gap-monopoly}

We need an additional lemma to prove the upper bound given to $\hat \delta^T = l^*(\vx) - l^{(T)}(\hat \vx, \hat \vy)$.

\begin{lemma}
\label{prop:key-again}
For any $\vx \in \sX$ and $\vy \in \sY$, we have
\begin{equation}
    \|\nabla_{\vx} h^{(T)}(\vx, \vy) - \nabla y^*(\vx) \|_2 \leq \left( \|\nabla_{\vx} h(\vx, \vy)\|_2 + \delta^0 T \left( H_{xy}  + \frac{L_x}{\gamma} \cdot H_{yy} \right) \right)\eta^{T / 2}, 
\end{equation}
where $\delta^0 = \|\vy - y^*(\vx)\|_2$.
\end{lemma}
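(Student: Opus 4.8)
The plan is to derive a one-step recursion for the matrix error $\mE_t := \nabla_{\vx} h^{(t)}(\vx, \vy) - \nabla y^*(\vx)$ and then unroll it. Write $\vy^{t} = h^{(t)}(\vx, \vy)$ for the follower iterates and $\vy^* = y^*(\vx)$ for the equilibrium, so that $\delta^0 = \|\vy - \vy^*\|_2$. The starting point is to subtract the fixed-point identity \eqref{eq:y-star-iter}, namely $\nabla y^*(\vx) = \nabla_{\vx} h(\vx, \vy^*) + \nabla y^*(\vx)\cdot \nabla_{\vy} h(\vx, \vy^*)$, from the unrolling recursion \eqref{eq:nabla-h-T}, $\nabla_{\vx} h^{(t)}(\vx, \vy) = \nabla_{\vx} h(\vx, \vy^{t-1}) + \nabla_{\vx} h^{(t-1)}(\vx, \vy)\cdot \nabla_{\vy} h(\vx, \vy^{t-1})$. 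Unlike the proof of Proposition \ref{prop:key-result}, where $\vy^{t}=\vy^*$ for all $t$ and the $\nabla_{\vx} h$ terms cancel exactly, here they differ; after adding and subtracting $\nabla y^*(\vx)\cdot\nabla_{\vy}h(\vx,\vy^{t-1})$ one obtains
\begin{equation*}
\mE_t = \big[\nabla_{\vx} h(\vx,\vy^{t-1}) - \nabla_{\vx} h(\vx,\vy^*)\big] + \mE_{t-1}\cdot\nabla_{\vy}h(\vx,\vy^{t-1}) + \nabla y^*(\vx)\cdot\big[\nabla_{\vy}h(\vx,\vy^{t-1}) - \nabla_{\vy}h(\vx,\vy^*)\big].
\end{equation*}

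Next I would bound the three pieces. For the homogeneous piece, Proposition \ref{thm:convergence-strong} (in the $\gamma=1$ normalization of Theorem \ref{thm:main}) gives $\|\nabla_{\vy} h(\vx,\vy^{t-1})\|_2 \le \eta^{1/2}$, so it contracts $\mE_{t-1}$ by $\eta^{1/2}$. The two inhomogeneous pieces are controlled by the second-order bounds of Assumption \ref{ass:dynamics}, $\|\nabla_{\vx} h(\vx,\vy^{t-1}) - \nabla_{\vx} h(\vx,\vy^*)\|_2 \le H_{xy}\|\vy^{t-1}-\vy^*\|_2$ and $\|\nabla_{\vy} h(\vx,\vy^{t-1}) - \nabla_{\vy} h(\vx,\vy^*)\|_2 \le H_{yy}\|\vy^{t-1}-\vy^*\|_2$, together with Lemma \ref{prop:y-star-lip}, which bounds $\|\nabla y^*(\vx)\|_2 \le L_x/\gamma$. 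Since $\vy^{t-1}$ is produced by the contraction $h(\vx,\cdot)$, Proposition \ref{thm:convergence-strong} also supplies the geometric decay $\|\vy^{t-1}-\vy^*\|_2 \le \eta^{(t-1)/2}\delta^0$. Collecting terms yields the scalar recursion
\begin{equation*}
\|\mE_t\|_2 \le \eta^{1/2}\,\|\mE_{t-1}\|_2 + \Big(H_{xy} + \tfrac{L_x}{\gamma}H_{yy}\Big)\,\delta^0\,\eta^{(t-1)/2}.
\end{equation*}

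Finally I would unroll this recursion down to the base term. The homogeneous factors multiply to $\eta^{T/2}$, and the crucial observation is that each inhomogeneous contribution, after being propagated through the remaining $\nabla_{\vy} h$ factors, carries a common power of $\eta$; because there are $T$ such contributions they sum to the linear factor $T$ multiplying $\eta^{T/2}$. The initial contribution collapses to $\|\nabla_{\vx} h(\vx,\vy)\|_2$ — the first iterate satisfies $\nabla_\vx h^{(1)}(\vx,\vy)=\nabla_\vx h(\vx,\vy)$ — producing the claimed bound.

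The main obstacle, and the reason for the extra linear factor $T$ relative to Proposition \ref{prop:key-result}, is precisely that the trajectory $\{\vy^{t}\}$ does not sit at the fixed point: the coefficient matrices $\nabla_\vx h(\vx,\vy^{t})$ and $\nabla_\vy h(\vx,\vy^{t})$ drift away from their equilibrium values. One must show that this drift — measured through $H_{xy},H_{yy}$ and decaying like $\eta^{t/2}\delta^0$ — accumulates only \emph{additively} across the $T$ steps rather than compounding, so that the overall geometric rate $\eta^{T/2}$ survives and merely picks up the polynomial prefactor $T$. Verifying that the homogeneous contraction dominates and factors cleanly out of the unrolled sum is the delicate bookkeeping step that separates this estimate from the exact fixed-point case.
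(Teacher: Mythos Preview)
Your proposal is correct and follows essentially the same approach as the paper: derive the one-step error recursion by subtracting the fixed-point identity \eqref{eq:y-star-iter} from the chain-rule expansion \eqref{eq:nabla-h-T}, bound the three resulting pieces via the contraction factor $\eta^{1/2}$, the second-order Lipschitz constants $H_{xy},H_{yy}$ from Assumption \ref{ass:dynamics}, and Lemma \ref{prop:y-star-lip}, and then unroll so that the inhomogeneous contributions sum to the linear prefactor in $T$. The only cosmetic difference is that the paper unrolls the matrix recursion first and then passes to norms, whereas you take norms immediately and unroll the resulting scalar recursion; both routes share the same loose bookkeeping on the base term (the paper ends with $\|\mD^{(0)}\|_2$ rather than the $\|\nabla_{\vx} h(\vx,\vy)\|_2$ of the lemma statement), which is harmless for the downstream use in the proof of Theorem \ref{thm:main}.
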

\begin{proof}
In the sequel, we write $\vy^t = h^{(t)}(\vx, \vy)$, $\mD^{(t)} = \nabla_{\vx} h^{(t)}(\vx, \vy^0) - \nabla y^*(\vx)$, $\mE^{(t)} = \nabla_{\vx} h(\vx, \vy^{t - 1}) - \nabla_{\vx} h(\vx,  \vy^*)$ and $\mF^{(t)} = \nabla_{\vy} h(\vx, \vy^{t - 1}) - \nabla_{\vy} h(\vx,  \vy^*)$,
$\mH^{(t)} = \nabla_{\vy} h^{(t)}(\vx, \vy^{T - t})$ and $\mJ = \nabla y^*(\vx)$. By recursively applying \eqref{eq:nabla-h-T} and \eqref{eq:y-star-iter}, we then have
\begin{equation}
\begin{split}
    \mD^{(T)} &= \mD^{(T - 1)} \cdot \mH^{(1)} + \mE^{(T)} +  \mJ \cdot \mF^{(T)}\\
    &= \mD^{(T - 2)} \cdot \mH^{(2)} + \mE^{(T - 1)} \cdot \mH^{(1)} + \mE^{(T)} + \mJ \cdot (\mF^{(T - 1)} \cdot \mH^{(1)} + \mF^{(T)}) \\
    &= \cdots = \mD^{(0)} \cdot \mH^{(T)} + \sum_{t = 0}^T \mE^{(T - t)} \cdot \mH^{(t)} + \mJ \cdot \sum_{t = 0}^T \mF^{(T - t)} \cdot \mH^{(t)}
\end{split}
\end{equation}
Applying the triangle inequality of norms, we then have
\begin{equation}
    \|\mD^{(T)}\|_2 
    \leq \|\mD^{(0)}\|_2 \cdot \|\mH^{(T)}\|_2 + \sum_{t = 0}^T (\|\mE^{(T - t)} \|_2  + \|\mJ\|_2 \cdot \|\mF^{(T - t)}\|_2) \cdot \|\mH^{(t)}\|_2.
\end{equation}
Lemma \ref{prop:y-star-lip} and Proposition \ref{thm:convergence-strong} then imply that  $\|\mJ\|_2 \leq L_x / \gamma$, $\|\mE^{(t)}\|_2 \leq H_{xy} \cdot \|\vy^{t - 1} - \vy^*\| \leq H_{xy} \delta^0 \eta^{t / 2}$, $\|\mF^{(t)}\|_2 \leq H_{yy} \cdot \|\vy^{t - 1} - \vy^*\| \leq H_{yy}  \delta^0 \eta^{t / 2} $, and also $\|\mH\|^{t} \leq \eta^{t / 2}$. Thus, we eventually have
\begin{equation}
\begin{split}
    \|\mD^{(T)}\|_2 
    &\leq \eta^{t / 2} \|\mD^{(0)}\|_2 + \sum_{t = 0}^T (H_{xy} \delta^0 \eta^{(T - t) / 2}  + \frac{L_x}{\gamma} \cdot H_{yy} \delta^0 \eta^{(T - t) / 2} ) \cdot \eta^{t / 2} \\
    &= \left( \|\mD^{(0)}\|_2 + \delta^0 (T + 1) \left( H_{xy}  + \frac{L_x}{\gamma} \cdot H_{yy} \right) \right)\eta^{T / 2}.
\end{split}
\end{equation}

\end{proof}

Now we are ready to complete the proof.
Using the triangle inequality and applying Lemma \ref{prop:l-star-lip}, we obtain
\begin{equation}
\begin{split}
    l^*(\vx^*) - l(\hat \vx, h^{(T)}(\hat \vx, \hat \vy)) &\leq |l^*(\vx^*) - l^*(\hat \vx)| + |l(\hat \vx, y^*(\hat \vx)) - l(\hat \vx, h^{(T)}(\hat \vx, \hat \vy))|_2 \\
    &\leq  \left(G_x + \frac{L_x G_y}{\gamma}\right) \cdot \|\hat \vx - \vx^*\|_2 + G_y \cdot \|y^*(\hat \vx) - h^{(T)}(\hat \vx, \hat \vy)\|_2
    \label{eq:M-1}
\end{split}
\end{equation}
We first bound $\| y^*(\hat \vx) - h^{(T)}(\hat \vx, \hat \vy)\|_2$. We can obtain that
\begin{equation}
    \begin{split}
        \|y^*(\hat \vx) - h^{(T)}(\hat \vx, \hat \vy)\|_2 \leq \eta^{T/2} \cdot  \| \hat \vy - y^*(\hat \vx)\|_2 \leq d_{\max} \cdot \eta^{T/2},
    \end{split}
    \label{eq:h-convergence-up}
\end{equation}
where $d_{\max} = \diam{(\sY)} < \infty$.

We then bound $\|\hat \vx - \vx^*\|$. Based on Proposition \ref{prop:vi-monopoly}, we have
\begin{equation}
    \langle \nabla_{\vx} l^{(T)}(\hat \vx, \hat \vy), \vx^* - \hat \vx \rangle \geq 0.
    \label{eq:tired}
\end{equation}
Combining it with Equation \eqref{eq:add2} by setting $\vx = \hat \vx$ and using the Cauchy-Schwartz inequality, we then have
\begin{equation}
\begin{split}
        &\mu \cdot \|\vx^* - \hat \vx\|_2^2 \\ &\leq \langle \nabla l^*(\hat \vx) - \nabla_{\vx} l^{(T)}(\hat \vx, \hat \vy), \hat \vx  - \vx^*\rangle \\
        &= \langle \nabla l^*(\hat \vx) - \nabla_{\vx} l^{(T)}(\hat \vx, y^*(\hat \vx)) + \nabla_{\vx} l^{(T)}(\hat \vx, y^*(\hat \vx)) -  \nabla_{\vx} l^{(T)}(\hat \vx, \hat \vy), \hat \vx  - \vx^*\rangle \\
        &\leq (\| \nabla l^*(\hat \vx) - \nabla_{\vx} l^{(T)}(\hat \vx, y^*(\hat \vx))\|_2 + \| \nabla_{\vx} l^{(T)}(\hat \vx, y^*(\hat \vx)) - \nabla_{\vx} l^{(T)}(\hat \vx, \hat \vy)\|_2) \cdot \| \hat \vx  - \vx^*\|_2 .
\end{split}
\end{equation}
Thus, we have
\begin{equation}
    \|\hat \vx - \vx^*\|_2 \leq \frac{1}{\mu} \cdot (\| \nabla l^*(\hat \vx) - \nabla_{\vx} l^{(T)}(\hat \vx, y^*(\hat \vx))\|_2 + \| \nabla_{\vx} l^{(T)}(\hat \vx, y^*(\hat \vx)) - \nabla_{\vx} l^{(T)}(\hat \vx, \hat \vy)\|_2).
\end{equation}
By applying Proposition \ref{prop:key-result}, we then have
\begin{equation}
\begin{split}
     \|\nabla l^*(\hat \vx) - \nabla_{\vx} l^{(T)}(\hat \vx, y^*(\hat \vx)) \|_2
    &\leq 
    \|\nabla y^*(\hat \vx)  - \nabla_{\vx} h^{(T)}(\hat \vx, y^*(\hat \vx)) \|_2 \cdot \|\nabla_{\vy} l(\hat \vx, y^*(\hat \vx))\|_2 \leq \frac{G_y L_x \eta^{T/2}}{\gamma}.
\end{split}
\end{equation}
Meanwhile, using the triangle inequality of norms, we have
\begin{equation}
\begin{split}
    &\| \nabla_{\vx} l^{(T)}(\hat \vx, y^*(\hat \vx)) - \nabla_{\vx} l^{(T)}(\hat \vx, \hat \vy)\|_2 = \|\nabla_{\vx} l(\hat \vx, y^*(\hat \vx)) -
    \nabla_{\vx} l(\hat \vx, h^{(T)}(\hat \vx, \hat \vy)) \|_2 \\
    &\qquad \qquad \qquad \qquad \qquad + \|\nabla_{\vx} h^{(T)}(\hat \vx, y^*(\hat \vx)) \|_2 \cdot \|\nabla_{\vy} l(\hat \vx, y^*(\hat \vx)) - \nabla_{\vy} l(\hat \vx, h^{(T)}(\hat \vx, \hat \vy)\|_2  \\
    &\qquad \qquad \qquad \qquad \qquad + \|\nabla_{\vx} h^{(T)}(\hat \vx, y^*(\hat \vx)) - \nabla_{\vx} h^{(T)}(\hat \vx, \hat \vy)\|_2 \cdot \| \nabla_{\vy} l(\hat \vx, h^{(T)}(\hat \vx, \hat \vy))\|_2.
\end{split}
\end{equation}
Firstly, noted that $\nabla_{\vx} l(\vx, \cdot)$ is $G_{xy}$-Lipschitz continuous, we have
\begin{equation}
    \|\nabla_{\vx} l(\hat \vx, y^*(\hat \vx)) -
    \nabla_{\vx} l(\hat \vx, h^{(T)}(\hat \vx, \hat \vy)) \|_2 \leq G_{xy} \cdot \| y^*(\hat \vx) - h^{(T)}(\hat \vx, \hat \vy) \|_2 \leq G_{xy} d_{\max} \eta^{T/2},
\end{equation}
where the second inequality comes from Equation \eqref{eq:h-convergence-up}.

Secondly, by applying the triangle inequality, we can obtain
\begin{equation}
\begin{split}
    \|\nabla_{\vx} h^{(T)}(\hat \vx, y^*(\hat \vx)) \|_2 &\leq \|\nabla_{\vx} h^{(T)}(\hat \vx, y^*(\hat \vx)) - \nabla_{\vx} y^*(\hat \vx)\|_2 + \|\nabla y^*(\hat \vx) \|_2 \leq \frac{L_x}{\gamma} (1 + \eta^{T/2}).
\end{split}
\end{equation}
where the second inequality comes from Lemma \ref{prop:y-star-lip} and \ref{thm:convergence-strong}. Noting that $\nabla_{\vy} l(\vx, \cdot)$ is $G_{yy}$-Lipschitz continuous, we also have
\begin{equation}
    \|\nabla_{\vy} l(\hat \vx, y^*(\hat \vx)) - \nabla_{\vy} l(\hat \vx, h^{(T)}(\hat \vx, \hat \vy)\|_2 \leq G_{yy} \cdot \|y^*(\hat \vx) -  h^{(T)}(\hat \vx, \hat \vy)\|_2 \leq G_{yy} d_{\max} \eta^{T/2},
\end{equation}
where the second inequality also comes from \eqref{eq:h-convergence-up}.

Thirdly, we have $\| \nabla_{\vy} l(\hat \vx, h^{(T)}(\hat \vx, \hat \vy))\|_2 \leq G_y$. Meanwhile, we also have
\begin{equation}
\begin{split}
     &\|\nabla_{\vx} h^{(T)}(\hat \vx, y^*(\hat \vx)) - \nabla_{\vx} h^{(T)}(\hat \vx, \hat \vy)\|_2 \\
     &\qquad \qquad \qquad \qquad \qquad \leq \|\nabla_{\vx} h^{(T)}(\hat \vx, y^*(\hat \vx)) - \nabla_{\vx} y^*(\hat \vx)\|_2 + \|\nabla_{\vx} h^{(T)}(\hat \vx, \hat \vy) - \nabla_{\vx} y^*(\hat \vx) \|_2 \\
     &\qquad \qquad \qquad \qquad \qquad \leq \frac{L_x \eta^{T/2}}{\gamma} + \left(H_x + T d_{\max} \left( H_{xy}  + \frac{L_x}{\gamma} \cdot H_{yy} \right) \right)\eta^{T / 2},
\end{split}
\end{equation}
where the second inequality comes from Lemma \ref{prop:key-again}.

Combing these, we obtain that
\begin{equation}
\begin{split}
    \mu \|\hat \vx^* - \vx^*\|_2 &\leq \frac{G_y L_x \eta^{T/2}}{\gamma} + G_{xy} d_{\max} \cdot \eta^{T/2} + \frac{L_x}{\gamma} (1 + \eta^{T/2}) G_{yy} d_{\max} \cdot \eta^{T/2} + \\
    &\qquad G_y \left( \frac{L_x \eta^{T/2}}{\gamma} + \left(H_x + T d_{\max} \left( H_{xy}  + \frac{L_x}{\gamma} \cdot H_{yy} \right) \right) \cdot \eta^{T / 2} \right).
\end{split}
\end{equation}
Without loss of generality, we assume that $\gamma = 1$ and $\mu = 1$. Then we have
\begin{equation}
\begin{split}
    \|\hat \vx^* - \vx^*\|_2 &\leq G_y L_x \cdot \eta^{T/2} + G_{xy} d_{\max} \cdot \eta^{T/2} + L_x (1 + \eta^{T/2}) G_{yy} d_{\max} \cdot \eta^{T/2} \\
    &\qquad + G_y \left( L_x \cdot \eta^{T/2} + \left(H_x + T d_{\max} \left( H_{xy}  + L_x \cdot H_{yy} \right) \right) \cdot \eta^{T / 2} \right) \\
    &\leq \left(2G_y L_x +  G_{xy} d_{\max} + 2 L_x G_{yy} d_{\max}  + G_y H_x\right) \cdot \eta^{T / 2} \\
    &\qquad + G_y (H_{xy} + L_x H_yy) \cdot (T + 1) \cdot \eta^{T / 2}.
\end{split}
\end{equation}
Eventually, we obtain that
\begin{equation}
\begin{split}
    l^*(\vx^*) - l(\hat \vx, h^{(T)}(\hat \vx, \hat \vy)) &\leq 
    G_y (H_{xy} + L_x H_yy) \cdot (T + 1) \cdot \eta^{T / 2}
    + M \cdot \eta^{T/2},
\end{split}
\end{equation}
where $M = (G_x + L_x G_y) \cdot \left(2G_y L_x +  G_{xy} d_{\max} + 2 L_x G_{yy} d_{\max} + G_y H_x\right) + G_y d_{\max}$.

\section{More Discussion on the Results in Section \ref{sec:main-results}}

\subsection{Gradient Calculation in Algorithms \ref{alg:cournot} and \ref{alg:monopoly}}
\label{app:dp-gradient}

Below we briefly discuss how to code $h(\vx, \vy)$ so that it can be fed into differentiable-programming frameworks. For simplicity, we denote $g(\vz) = \argmin_{\vy \in \sY} \|\vy - \vz\|_2$, then  $h(\vx, \vy) = g(\vy - r \cdot f(\vx, \vy))$, where $\vz = \vy - r \cdot f(\vx, \vy)$. Thus, the problem reduces to how to code $g(\vz)$. Since $g(\vz)$ is equivalent to a Euclidean projection problem, it can always be coded via \texttt{cvxpylayers} \citep{agrawal2019differentiable}, as discussed earlier. 
Here we consider some other coding strategies.  For example, there are three cases when $g(\vz)$ is analytic, so that we can directly code $g(\vz)$ according to its analytic expression.
\begin{itemize}
    \item (i) When  $\sY = \sR^{n}$ (the full space), we always have $g(\vz) = \vz$.
    \item (ii) When $\sY = \{\vy: \va \leq \vy \leq \vb\}$ (a box constraint), we have $g(\vz) = \min\{\max\{\vz, \va\}, \vb\}$.
    \item (iii) When $\sY = \{\vy: \vone^{\T} \vy = 1\}$ (an equity constraint), we have $g(\vz) = \vz -  (\vone^{\T} \vz - 1) / n$.
\end{itemize}
We are now ready to handle a more complicated case when $\sY$ is probability simplex as in many practical applications. 
\begin{itemize}
    \item (iv) When $\sY = \{\vy \geq \vzero: \vone^{\T} \vy = 1\}$ (a \textit{probability simplex constraint}), we can rewrite $\sY = \{\vy: \vy \geq \vzero\} \cap \{\vy: \vone^{\T} \vy = 1\}$, which is the intersection of a box constraint and an equity constraint. Thus, we can use Dykstra's projection algorithm \citep{boyle1986method} to solve $g(\vz)$, which iteratively projects $\vz$ onto two sets until a fixed point is found. This alternating projection process is differentiable because the projections on both sets respectively correspond to cases (ii) and (iii), both of which are analytic. 
\end{itemize}
Subsequently, we can move to a more complicated case when $\sY$ is rewritten as $\sY = \sY_1 \times \cdots \times \sY_n$, i.e., the Cartesian product of some other sets, the projection then can be carried out on $\sY_i$ ($i = 1, \ldots, n$) independently. Hence, as long as every $\sY_i$ falls into the four cases discussed above, the function  $g(\vz)$ can still be coded as a differentiable program easily. Decomposing the projection problem $g(\vz)$ into $n$ parallel sub-problems can also accelerate AD.

\begin{remark}
Here we mark that a ``differentiable program" is not necessarily everywhere differentiable. Instead, it only means that the program can be fed to differentiable-programming frameworks. For example, in case (ii), the operation $\min\{\max\{\vz, \va\}, \vb\}$ is not differentiable at $\vz = \va$ and $\vz = \vb$; at those points, a sub-gradient can be used to run AD.

\end{remark}

\subsection{Other Functions for Formulating the Two Models}
\label{app:other-h}

To formulate the two proposed models, the selection of $h(\vx, \vy)$ is not absolute. Indeed, we have shown that whenever $h(\vx, \vy)$ charts a provably convergent path to solve the lower-level VI in Problem \eqref{eq:bilevel}, the resulting $T$-step Cournot games and monopoly models will provide an upper and a lower bound to Problem \eqref{eq:bilevel}, respectively. For example, the mirror descent method \citep{nemirovskij1983problem} gives
\begin{equation}
    h(\vx, \vy) = \argmin_{y' \in \sY}~ r \cdot  \langle f(\vx, \vy), \vy' - \vy \rangle + D_{\phi}(\vy', \vy),
    \label{eq:bregman-extension}
\end{equation}
where $D_{\phi}(\vy', \vy) =  \phi(\vy') - \phi(\vy) - \langle \nabla \phi(\vy), \vy' - \vy\rangle$ is the Bregman divergence between $\vy'$ and $\vy$ induced by a strongly convex function $\phi: \sY \to \sR$. We refer the readers to \citet{mertikopoulos2019learning} for sufficient conditions under which the mirror descent algorithm converges to the solution to a VI. When $\sY$ is a probability simplex (or the Cartesian product of several probability simplices) and $D_{\phi}(\vy', \vy)$ is specified as the KL divergence, $h(\vx, \vy)$ becomes analytic \citep{beck2003mirror}, so that it can be more easily differentiated through via AD.

\subsection{Global Optimization Strategies}
\label{app:global}

In Theorem \ref{thm:main}, we assume $l^*(\vx)$ to be globally convex; in general, this assumption does not hold for most practical applications. In this section, we briefly discuss how to search for the global minimizer of Problem \eqref{eq:bilevel}. Typically, when an optimization problem is not convex, then we have no choice but to accept the best local solution after running an appropriate first-order method multiple times with different initial solutions. But for bilevel programs, repeatedly searching for local solutions is not easy. Nevertheless, with our framework, we may first fix $T$ as a small value (e.g., 0 or 1) and first run Algorithm \ref{alg:cournot} and/or Algorithm \ref{alg:monopoly} many times, each time with a randomly generated initial solution. This step may be viewed as a quick-and-dirty ``scan" of the geometry of the overall problem. Afterward, the best local solution to either $T$-step Cournot games or monopoly models then can be used to roughly estimate where the global optimal solution to \eqref{eq:bilevel} might be located. Subsequently, we can search for the optimal solution to Problem \eqref{eq:bilevel} around that region, following the procedure that we devised at the end of Section \ref{sec:gap}.

\subsection{Extension to Bilevel Programs with Multiple Lower-Level Solutions}
\label{sec:non-unique}

We then discuss the case when $\sY^*(\vx)$ is not a singleton, which renders a principled rule for selecting a $\vy^* \in \sY^*(\vx)$ to evaluate $l(\vx, \vy^*)$ as a necessity. To this end, the optimistic and pessimistic principles respectively select  the best and worst solutions, as judged by the leader's objective. In other words, they respectively solve
\begin{equation}
    \min_{\vx \in \sX} \ \min_{\vy^* \in \sY^*(\vx)}~l(\vx, \vy^*) \quad \text{and} \quad \min_{\vx \in \sX} \ \max_{\vy^* \in \sY^*(\vx)}~l(\vx, \vy^*).
\end{equation}
The above two problems are commonly referred to as a \textit{strong} and a \textit{weak} Stackelberg game respectively \citep{loridan1996weak}.
Denoting the solutions to those two problems as $(\vx_{\text{opt}}^*, \vy_{\text{opt}}^*)$ and $(\vx_{\text{pes}}^*, \vy_{\text{pes}}^*)$, then we have
$
    l(\vx_{\text{opt}}^*, \vy_{\text{opt}}^*) \leq l(\vx_{\text{pes}}^*, \vy_{\text{pes}}^*).
$
In the literature, the strong Stackelberg games is more frequently regarded as the ``standard" formulation \citep{luo1996mathematical}. The original formulation \eqref{eq:bilevel} is indeed a strong Stackelberg game because it allows the leader to pick any $\vy^* \in \sY^*(\vx)$. Thus, the leader can always pick the most favorable (i.e., the optimistic) one. According to Proposition \ref{prop:model-gap}, the solutions given by $T$-step Cournot games and monopoly models always provide an upper and a lower bound for $l(\vx_{\text{opt}}^*, \vy_{\text{opt}}^*)$, respectively. We are hence motivated to ask whether our models would still provide \textit{arbitrarily tight} bounds to $l(\vx_{\text{opt}}^*, \vy_{\text{opt}}^*)$ when $T \to \infty$.

Unfortunately, we cannot answer this question with a certain yes. Indeed, in a $T$-step Cournot game, the followers' decision $\bar \vy$ is not necessarily the one that minimizes $l(\bar \vx, \vy^*)$ among all $\vy^* \in \sY^*(\bar \vx)$. In other words, a $T$-step Cournot game is not intrinsically optimistic; instead, it has to let the followers pick an equilibrium by themselves. Hence, even though $T$ is sufficiently large, a $T$-step Cournot game may still admit multiple solutions. If one solution $(\bar \vx, \bar \vy)$ accidentally satisfies $\bar \vy \in \argmin_{\vy^*} l(\bar \vx, \vy^*)$, then we expect $l(\bar \vx, \bar \vy)$ may be close to $l(\vx_{\text{opt}}^*, \vy_{\text{opt}}^*)$. Otherwise, if $\bar \vy$ is the solution in $\sY^*(\bar \vx)$ against the leader's interest, the gap may still  be unacceptably large. Below we give an example.

\begin{example}
Consider the following problem
\begin{equation}
\begin{split}
    \min_{1 \geq x \geq 0.5}~~&(x + y^* - 1)^2 \\
    \text{s.t.}~~&y^* \in \argmin_{y \in \sR}~g(x, y) = 
    \begin{cases}
    (y - x - 0.25)^2, \quad \text{if}~y \geq x + 0.25, \\
    \ 0, \quad \text{if}~y \in (x - 0.25, x + 0.25), \\
    (y - x + 0.25)^2, \quad \text{if}~y \leq x - 0.25,
    \end{cases}
\end{split}
\end{equation}
which is equivalent to the following one
\begin{equation}
\begin{split}
    \min_{1 \geq x \geq 0.5}~~&(x + y^* - 1)^2 \\
    \text{s.t.}~~&y^* \in [x - 0.25, x + 0.25].
    \label{eq:example}
\end{split}
\end{equation}
Under this setting, the optimal solution set to \eqref{eq:example} is $\sZ^* = \{(x, y): x \in [0.5, 1], y = 1 - x\}$ and the solution set to the corresponding $0$-step Cournot game is $\sZ^{(0)} = \sZ^* \cup \{(x, y): x = 0.5, y \in [0.5, 0.75]\}$.
We then prove that the solution sets to $T$-step Cournot models are still $\sZ^{(0)}$ for alll $T > 0$. 
\begin{equation}
    \nabla_y g(x, y) = 
    \begin{cases}
    2(y - x - 0.25), \quad \text{if}~y \geq x + 0.25, \\
    \ 0, \quad \text{if}~y \in (x - 0.25, x + 0.25), \\
    2(y - x + 0.25), \quad \text{if}~y \leq x - 0.25.
    \end{cases}
\end{equation}
We can then define
\begin{equation}
    h(x, y) = \argmin_{1 \geq x \geq 0.5}~~y - r \cdot \nabla_y g(x, y)
\end{equation}
to model the dynamics, which satisfies $h(x, y) = y$ for any $x \in [0.5, 1]$ and $y \in [x - 0.25, x + 0.25]$. Thus, for any $\bar y \in [0.5, 0.75]$, the optimal solution to 
\begin{equation}
    \min_{1 \geq x \geq 0.5}~~(x + h^{(T)}(x, \bar y) - 1)^2 = (x + \bar y - 1)^2
\end{equation}
is still $\bar x = 0.5$ for all $T > 0$. Therefore, we have $\sZ^{(T)} = \sZ^* \cup \{(x, y): x = 0.5, y \in [0.5, 0.75]\}$ for all $T \in \sN$. Among these solutions, the worst one is $(\bar x, \bar y) = (0.5, 0.75)$, whose corresponding objective value is $0.0625 > 0$.
\end{example}

In contrast, a $T$-step monopoly model, to some extent, is essentially optimistic. Intuitively, if $(\hat \vx, \hat \vy)$ solves a $T$-step monopoly model with a sufficiently large $T$, we may expect that $h^{(T)}(\hat \vx, \hat \vy)$ is close to the one that minimizes $l(\hat \vx, \vy^*)$ among all $\vy^* \in \sY^*(\hat \vx)$, as the leader has the authority to steer the follower's evolutionary path by manipulating their initial strategy.

Motivated by the above discussion, we design a heuristic for approximating Problem \eqref{eq:bilevel} by iteratively solving $T$-step Cournot games and monopoly models. Specifically, setting $T$ as a small number and starting with an arbitrary $(\vx^0, \vy^0) \in \sX \times \sY$, we may first run run Algorithms \ref{alg:cournot} and \ref{alg:monopoly} to obtain $(\bar \vx, \bar \vy)$ and $(\hat \vx, \hat \vy)$, respectively. If the gap between $l^{(T)}(\bar \vx, \bar \vy)$ and $l^{(T)}(\hat \vx, \hat \vy)$ is sufficiently small, then we can accept $(\bar \vx, \bar \vy)$ as an approximated solution to Problem \eqref{eq:bilevel}. Otherwise, we can increase $T$ and repeat the above procedure. But during the next iteration, we may use a warm-start initial solution, rather than the original $(\vx^0, \vy^0)$, to run Algorithms \ref{alg:cournot} and \ref{alg:monopoly} to reduce the number of iterations required for convergence. Particularly, that initial solution could set as $ (\hat \vx, h^{(T)}(\hat \vx, \hat \vy))$, where $(\hat \vx, \hat \vy)$ is the output of Algorithm \ref{alg:monopoly} in the previous iteration. As discussed earlier, the pair  $(\hat \vx, h^{(T)}(\hat \vx, \hat \vy))$ is intrinsically optimistic. Hence, if in the next iteration, we run Algorithm \ref{alg:cournot} starting from $(\hat \vx, h^{(T)}(\hat \vx, \hat \vy))$, the new output $(\bar \vx, \bar \vy)$ is more likely to within the neighborhood of $(\vx_{\text{opt}}^*, \vy_{\text{opt}}^*)$. The pseudocode code is provided in Algorithm \ref{alg:adaptive-initialization}. We are not sure whether the gap $\delta^{(T)}$ in Algorithm \ref{alg:adaptive-initialization} would eventually converge to 0 or a sufficiently small value for all practical purposes; we leave a theoretical analysis of the algorithm to future work. But later, we will numerically test it.

\begin{algorithm}[ht]
   \caption{ {\small Adaptive-initialization strategy for approximating Problem \eqref{eq:bilevel}.}}
   \label{alg:adaptive-initialization}
\begin{algorithmic}[1]
{\small
  \STATE {\bfseries Input:} $ (\vx^0, \vy^0) \in \sX \times \sY$, $T$ as a small integer
  \WHILE{True}
    \STATE Starting from $(\vx^0,  \vy^0)$, run Algorithm \ref{alg:cournot} to obtain $(\bar \vx, \bar \vy)$ and then run Algorithm \ref{alg:monopoly} to obtain  $(\hat \vx, \hat \vy)$.
    \STATE If $l^{(T)}(\bar \vx, \bar \vy) - l^{(T)}(\hat \vx, \hat \vy)$ is sufficiently small, break and return $(\bar \vx, \bar \vy)$; otherwise, increase $T$ and set $(\vx^0,  \vy^0) = (\hat \vx, h^{(T)}(\hat \vx, \hat \vy))$.
  \ENDWHILE
  }
\end{algorithmic}
\end{algorithm}

\section{Numerical Experiments}
\label{app:experiments}

In Experiment I (Appendix \ref{app:exp-1}), we will study a bilevel program whose lower-level problem does not admit a unique solution. Algorithm \ref{alg:adaptive-initialization} will be numerically tested. In Experiment II (Appendix \ref{app:exp-2}), we study the network design problem on the Braess network \citep{braess1968paradoxon}. We will compare the solutions provided by the $T$-step Cournot games and monopoly models formulated by two types of $h(\vx, \vy)$: the projection method and the mirror descent method (see Appendix \ref{app:other-h}). Eventually, in Experiment III (Appendix \ref{app:exp-3}), we study the network problem on a real transportation network and compare the proposed scheme with many existing bilevel programming algorithms.

\subsection{Experiment I}
\label{app:exp-1}

We first consider a bilevel program of the following form
\begin{equation}
    \begin{split}
        \small \min_{\va + \vx \geq 0}~&\|\vx\|_2 + \langle \mD^{\T} (\va + \vx + \vb \cdot \vv), \vw \cdot \vy^* \rangle, \\
         \text{s.t.}~&\evx_4 = 0, \quad \vv = \mD \vy, \\
        &\vy^* \in \argmin_{\vy \geq 0, \ \textbf{1}^{\T} \vy = 1}~ \langle \textbf{1}^{\T}, (\va + \vx) \cdot \vv + \frac{1}{2} \cdot \vb \cdot \vv^2 \rangle,
    \end{split}
\end{equation}
where $\vx \in \sR^4$, $\vy \in \sR^4$, $\vw = [2, 1.1, 0.9, 0.01]^{\T}$, $\vb = [15, 2, 8, 5]^{\T}$, $\va = [2, 3, 4, 5]^{\T}$, $\mD = [\ve_1 + \ve_3, \ve_2 + \ve_4, \ve_1 + \ve_4, \ve_2 + \ve_3]$. Thus, the lower level is
a convex but not strongly convex optimization, whose optimal solution set is a non-singleton. We first run Algorithm \ref{alg:cournot} and \ref{alg:monopoly} for $T = 0, \ldots, 5$. At each round, $\vx^0$ is randomly sampled from a Gaussian distribution while $\vy^0$ is first randomly sampled from a uniform distribution and then re-weighted to fit the constraint. The result (see Figure \ref{fig:experiment2-b}) shows the boxplot of the objective values reached from different initial points. It verifies our conclusions that $T$-step Cournot games may have multiple solutions with different objective values, while $T$-step monopoly models always have a unique optimal objective value.

\begin{figure}[ht]
\begin{center}
\centerline{\includegraphics[width=0.6\columnwidth]{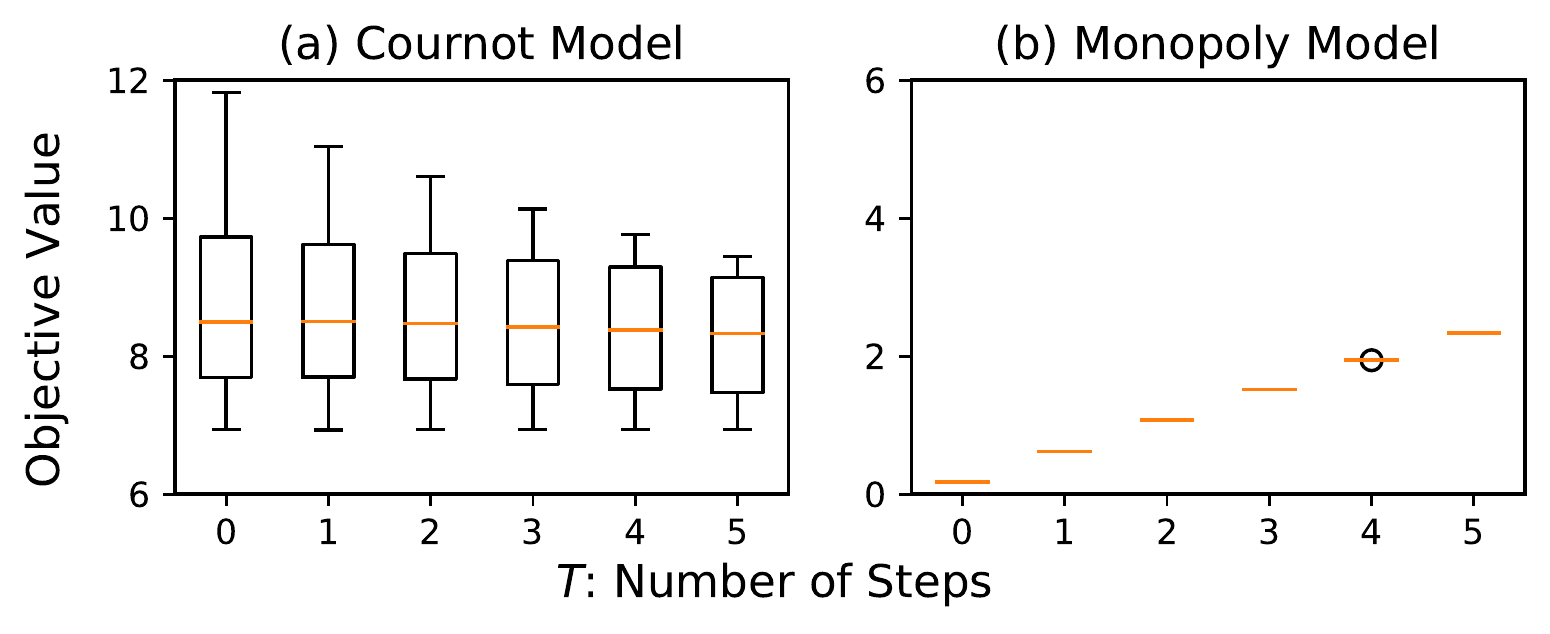}}
\vskip -0.1in
\caption{Comparison between Algorithm \ref{alg:cournot} and \ref{alg:monopoly} starting from different initial points.}
\label{fig:experiment2-b}
\end{center}
\vskip -0.2in
\end{figure}

We then compare two initialization strategies. (a) Fixed initialization strategy: we follow the procedure devised at the end of Section \ref{sec:algorithm}; the initial solutions are set as $\vx^0 = [0, 0, 0, 0]^{\T}$ and $\vy^0 = [0.4, 0.3, 0.2, 0.1]$ whenever Algorithms \ref{alg:cournot} and \ref{alg:monopoly} are invoked. (b) Adaptive initialization strategy: we directly run Algorithm \ref{alg:adaptive-initialization} with $\vx^0 = [0, 0, 0, 0]^{\T}$ and $\vy^0 = [0.4, 0.3, 0.2, 0.1]$ as the initial input. When testing both strategies, we gradually increase $T$ as $0, 1, 2, 3, 4, 5, 7, 10, 20, 30, 40, 50, 60, 70$. The result (see Figure \ref{fig:experiment2-c}) shows that when using the fixed-initialization strategy, the gap between upper and lower bounds is still non-eligible when $T = 70$. On the contrary, when using the adaptive-initialization strategy, a near-optimal feasible is found when $T$ is only $1$ because the previous monopoly model identifies a neighborhood of the ``optimistic" solution.  Meanwhile, the gap between the two models gradually decreases to 0, eventually settling down at the exact optimal solution.

\begin{figure}[ht]
\begin{center}
\centerline{\includegraphics[width=0.6\columnwidth]{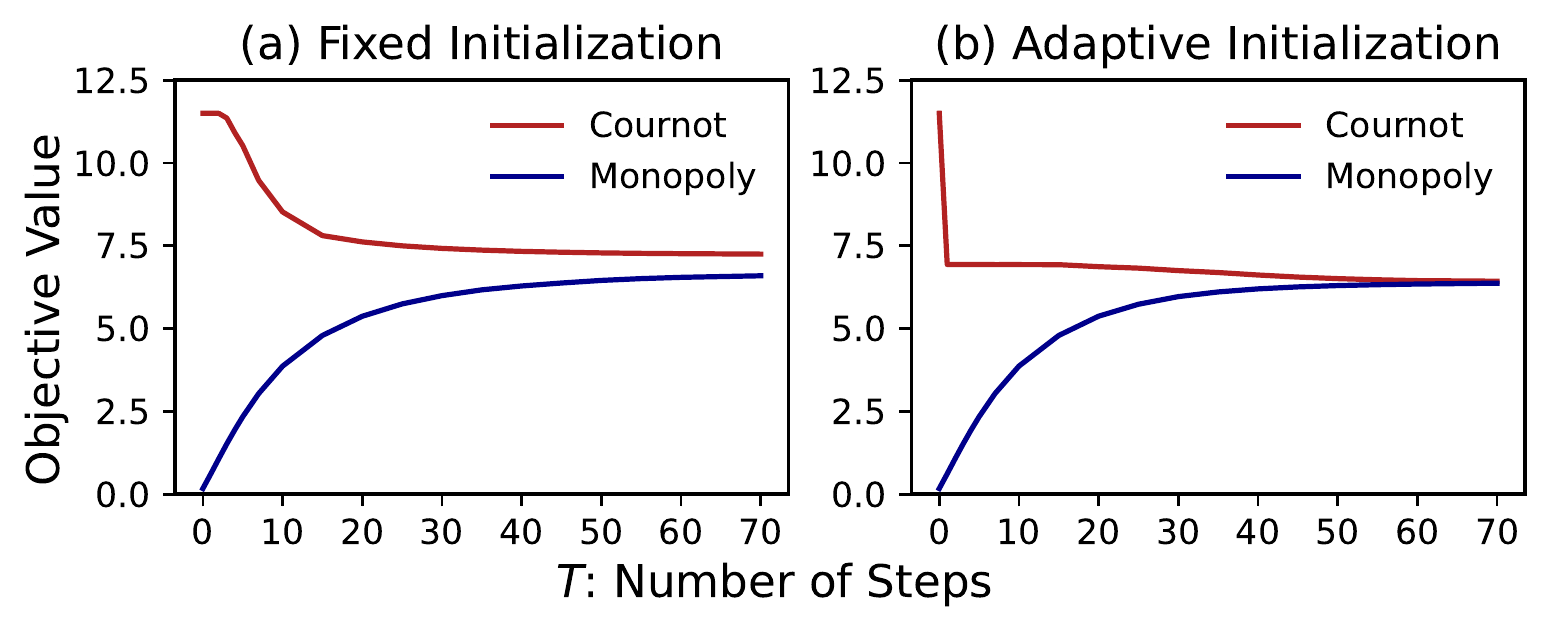}}
\vskip -0.1in
\caption{Comparison between fixed- and adaptive-initialization strategies (Algorithm \ref{alg:adaptive-initialization}).}
\label{fig:experiment2-c}
\end{center}
\vskip -0.2in
\end{figure}

In the future, we will test Algorithm \ref{alg:adaptive-initialization} on more complicated examples. But in this special example, its effectiveness in handling bilevel programs with non-unique lower-level solutions is validated.

\subsection{Experiment II}
\label{app:exp-2}

We then test our algorithms on a classic network design problem, which studies how to add capacities in a traffic network to reduce congestion. We model the network as a directed graph $\gG(\sN, \sA)$, where $\sN$ and $\sA$ are the set of nodes and arcs. We write $\sW \subseteq \sN \times \sN$ as the set of origin-destination (OD) pairs and $\sK \subseteq 2^{\sA}$ as the set of paths connecting the OD pairs.  We assume that each OD pair $w \in \sW$ is associated with $\evq_w$ vehicles. Let $\sK_w \subseteq \sK$ be the set of all paths connecting $w$. We assume that the traffic planner's objective is a weighted sum of the total monetary cost associated with the capacity enhancement and the total travel time experienced by the vehicles. We denote the original arc capacity as $\vs \in \sR_{+}^{|\sA|}$ and the capacity enhancement added by the planning agent as $\vx$. We assume that the capacities can only be added to selected arcs, denoted as $\tilde \sA \subseteq \sA$. The feasible region for $\vx$ then becomes $\sX = \{\vx \in \sR_{+}^{|\sA|}: \evx_a = 0, a \in \sA \setminus \tilde \sA\}$. We write $m(\vx) = \langle \vb, \vx^2 \rangle$ as the total monetary cost associated with the capacity enhancement, where $\vb \in \sR_{+}^{|\sA|}$ are cost parameters.
We assume that the arc travel time is given by $u(\vx, \vv) = \vu_0 \cdot \left(1 + 0.15 \cdot (\vv / (\vs + \vx)) ^ 4 \right)$, where $\vv, \vu_0 \in \sR_{+}^{|\sA|}$ represent arc vehicle flows and free-flow travel times, respectively.  Meanwhile, we write the vehicles' route choices as a vector $\vy = (\evy_k)_{k \in \sK}$ with $\evy_{k}$ equals the proportion of vehicles between  $w$ using the path $k \in \sK_w$, and the travel times for using each path as $\vc = (\evc_k)_{k \in \sK}$. Denote $\emSigma_{w,k}$ as the OD-path incidence, with $\emSigma_{w,k}$ equals 1 if the path $k \in \sK_w$ and 0 otherwise. Meanwhile, denote $\emLambda_{a,k}$ as the arc-path incidence, with $\emLambda_{a,k}$ equals 1 if $a \in \sA_k$ and 0 otherwise. For notational convenience, we write $\mLambda = (\emLambda_{a,k})_{a \in \sA, k \in \sK}$ and $\mSigma = (\emSigma_{w,k})_{w \in \sW, k \in \sK}$. Let $\vd = (\evd_k)_{k \in \sK}$ be a vector with $\evd_k = \evq_w$ if $k \in \sK_w$. The feasible region for $\vy$ can then be written as
$\sY = \{\vy \geq 0: \mSigma \, \vy = \bm 1\}.$ Meanwhile, we also have $\vv = \mLambda (\vd \cdot \vy)$ and $\vc = \mLambda^{\T} u(\vx, \vv)$. The results in Example \ref{eg:wardrop} imply that the set of Wardrop equilibria $\sY^*(\vx)$ is the solution set to the following VI problem:
\begin{equation}
    \langle \mLambda^{\T} u(\vx, \mLambda (\vd \cdot \vy^*)), \vy - \vy^* \rangle \geq 0, \quad \forall \vy \in \sY.
\end{equation}
The network design problem then has the following form 
\begin{equation}
\begin{split}
    \min_{\vx \in \sX}~~ &\langle u(\vx, \vv^*)), \vv^*) \rangle + \gamma \cdot m(\vx), \\
    \text{s.t.}~~&\vv^* = \mLambda (\vd \cdot \vy^*), \quad \vy^* \in \sY^*(\vx).
\end{split}
\label{eq:capacity-design}
\end{equation}

We first solve the network design problem on the Braess network \citep{braess1968paradoxon} as shown in Figure \ref{fig:n1}. The network has three paths connecting the origin (node 1) and the destination (node 4): path 1 uses links 1 and 3, path 2 uses links 1, 4, and 5, and path 3 uses links 2 and 5. The Braess paradox \citep{braess1968paradoxon} implies that no capacities should be added to link 4 (the bridge link). Otherwise, it would increase the total travel time experienced by the travelers at equilibrium.
\begin{figure}[H]
\centering
\includegraphics[width=0.28\textwidth]{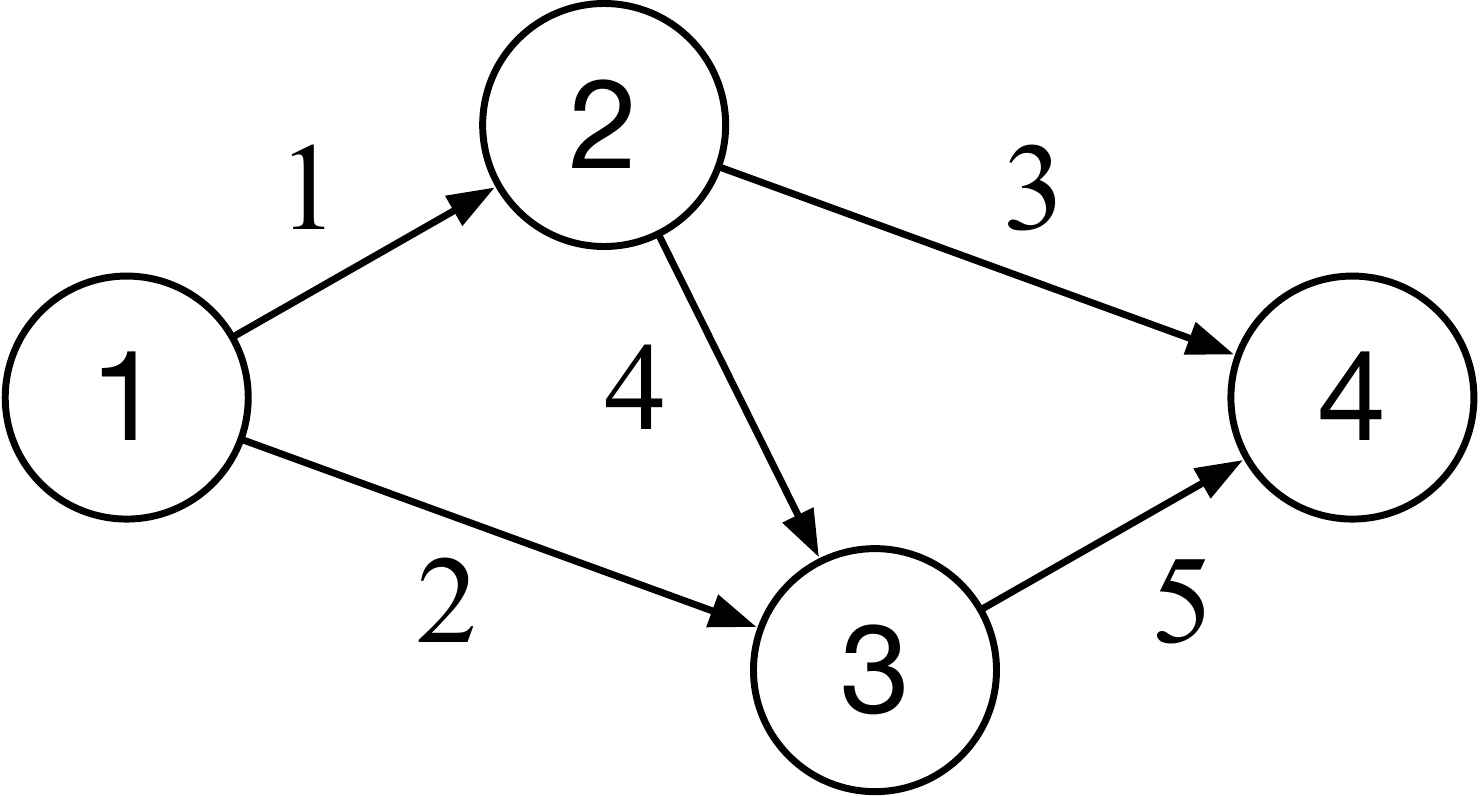}
    \captionof{figure}{The Braess network.}
    \label{fig:n1}
\end{figure}

We set $m(\vx) = \langle \vw, \vx^2 \rangle$, $\vd = 6$, $\vu_0 = [1, 3, 3, 0.5, 1]^{\T}$, $\vv_0 = [2, 4, 4, 1, 2]^{\T}$, $\vw = [1, 3, 3, 0.5, 1]^{\T}$, and $\gamma = 1$. The problem is first solved via two existing AD-based methods proposed by \citet{li2020end} and \citet{li2022differentiable}, both of which unroll the full computational process for solving the lower-level VI problem. The difference lies in the algorithm being unrolled: the first unrolls the projection method while the second unrolls the mirror descent method (see Section \ref{app:other-h}). In the following experiments, the solutions returned by \citet{li2020end}'s and \citet{li2022differentiable}'s methods will be used as the benchmark. When testing our scheme, we also formulate $T$-step Cournot games and monopoly models with two types of $h(\vx, \vy)$: the projection method and the mirror descent method. To investigate whether their solutions can make a nice approximation to the benchmark solutions,  we progressively increase $T$ in the experiment 
until the gap is closed. Our models are solved by Algorithms \ref{alg:cournot} and \ref{alg:monopoly}; the same hyperparameters are employed for all tested algorithms (including both the benchmarks and our algorithms), except the learning rate $r$, which is set to be $0.1$ in the projection-method version and $0.25$ in the mirror-descent version. Table \ref{tab:exp-1-2}  reports the solutions  (upper-level: capacity enhancement; lower-level: route choice), the corresponding objective function values as well as the total CPU (2.9 GHz Quad-Core Intel Core i7) time and the number of iterations required to obtain the solutions (for notational simplicity, we use ``S" to represent the benchmarks and use ``C-" and ``M-" to represent $T$-step Cournot games and monopoly models with different $T$s). 
\begin{table}[htb]
\centering
\caption{Performance of $T$-step Cournot games and $T$-step monopoly models when solving the continuous network design problem on the Braess network.}
\label{tab:exp-1-2}
\vspace{0pt}
\begin{subtable}{0.99\textwidth}
  \footnotesize
  \centering
  \caption{Setting A: Modeling the lower-level solution process using the projection method.}
  \vspace{-2pt}
    \begin{tabular}{cccccccccccccc}
    \toprule
    \multirow{2}[4]{*}{Method} & \multirow{2}[4]{*}{Time (s)} & \multirow{2}[4]{*}{Iterations} & \multirow{2}[4]{*}{Value} &       & \multicolumn{5}{c}{Capacity enhancement} &       & \multicolumn{3}{c}{Route choice} \\
    \cmidrule{6-10}\cmidrule{12-14}          &       &       &       &       & $a = 1$ & $a = 2$ & $a = 3$ & $a = 4$ & $a = 5$ &       & $k = 1$ & $k = 2$ & $k = 3$ \\
    \midrule
    C-0   & 1.06  & 80    & 38.786  &       & 2.075  & 0     & 0     & 2.826  & 2.075  &       & 0.000  & 1.000     & 0.000  \\
    C-1   & 6.68  & 109   & 28.920  &       & 0.936  & 0.016  & 0.016  & 0     & 0.936  &       & 0.339  & 0.321  & 0.339  \\
    \midrule
    S     & 12.5  & 92    & 28.920  &       & 0.928  & 0.016  & 0.016  & 0     & 0.928  &       & 0.333  & 0.333  & 0.333  \\
    \midrule
    M-4   & 8.22  & 46    & 28.920  &       & 0.928  & 0.016  & 0.016  & 0     & 0.928  &       & 0.329  & 0.343  & 0.329  \\
    M-3   & 35.1  & 339   & 26.745  &       & 0.867  & 0.027  & 0.027  & 0.192  & 0.867  &       & 0.156  & 0.688  & 0.156  \\
    M-2   & 2.16  & 43    & 26.789  &       & 0.744  & 0.039  & 0.039  & 0.009  & 0.744  &       & 0.217  & 0.565  & 0.217  \\
    M-1   & 0.31  & 40    & 27.142  &       & 0.603  & 0.066  & 0.066  & 0.000  & 0.603  &       & 0.225  & 0.549  & 0.225  \\
    M-0   & 0.03  & 70    & 26.722  &       & 0.821  & 0.030  & 0.030  & 0.113  & 0.821  &       & 0.424  & 0.151  & 0.424  \\
    \bottomrule
    \end{tabular}%
  \label{tab:exp-1-2-1}%
  \end{subtable}

    \vspace{10pt}
    \begin{subtable}{0.99\textwidth}
    \footnotesize
    \centering
    \caption{Setting B: Modeling the lower-level solution process using the mirror descent method.}
    \vspace{-2pt}
    \begin{tabular}{cccccccccccccc}
    \toprule
    \multirow{2}[4]{*}{Method} & \multirow{2}[4]{*}{Time (s)} & \multirow{2}[4]{*}{Iterations} & \multirow{2}[4]{*}{Value} &       & \multicolumn{5}{c}{Capacity enhancement} &       & \multicolumn{3}{c}{Route choice} \\
    \cmidrule{6-10}\cmidrule{12-14}          &       &       &       &       & $a = 1$ & $a = 2$ & $a = 3$ & $a = 4$ & $a = 5$ &       & $k = 1$ & $k = 2$ & $k = 3$ \\
    \midrule
    C-0   & 0.103 & 264   & 38.786 &       & 2.075 & 0     & 0     & 2.830  & 2.075 &       & 0.000     & 1.000     & 0.000 \\
    C-1   & 0.148 & 254   & 28.925 &       & 0.966 & 0.015 & 0.015 & 0     & 0.966 &       & 0.339 & 0.322 & 0.339 \\
    C-2   & 0.208 & 270   & 28.920 &       & 0.939 & 0.016 & 0.016 & 0     & 0.939 &       & 0.339 & 0.321 & 0.339 \\
    \midrule
    S     & 0.393 & 232   & 28.920 &       & 0.928 & 0.016 & 0.016 & 0     & 0.928 &       & 0.333 & 0.333 & 0.333 \\
    \midrule
    M-5   & 0.292 & 232   & 28.920 &       & 0.928 & 0.016 & 0.016 & 0     & 0.928 &       & 0.319 & 0.363 & 0.319 \\
    M-4   & 0.562 & 512   & 26.722 &       & 0.821 & 0.03  & 0.03  & 0.083 & 0.821 &       & 0.169 & 0.663 & 0.169 \\
    M-3   & 0.459 & 440   & 26.722 &       & 0.821 & 0.03  & 0.03  & 0.084 & 0.821 &       & 0.179 & 0.642 & 0.179 \\
    M-2   & 0.323 & 381   & 26.722 &       & 0.821 & 0.03  & 0.03  & 0.085 & 0.821 &       & 0.190 & 0.620 & 0.190 \\
    M-1   & 0.235 & 341   & 26.722 &       & 0.82  & 0.03  & 0.03  & 0.088 & 0.82  &       & 0.202 & 0.596 & 0.202 \\
    M-0   & 0.151 & 447   & 26.722 &       & 0.821 & 0.03  & 0.03  & 0.095 & 0.821 &       & 0.425 & 0.151 & 0.425 \\
    \bottomrule
    \end{tabular}%
  \label{tab:exp-1-2-2}%
  \end{subtable}
\end{table}%

The solutions returned by the two benchmark algorithms are identical;  no capacity is added on link 4 as indicated by the Braess paradox. However, if we adopt $T$-step Cournot games and monopoly models to approximate the original problem, then some capacity would be added on link 4 when $T$ is overly too small, which falls into the trap. However, a slightly larger $T$ will fix the problem in both models, no matter which method is used to model the lower-level solution process.  %
It is worth noting that when $T$ is the same, the solutions to the models formulated by the projection method have better quality.  However, the slight improvement in solution quality can hardly offset the increase in computational cost. Take C-1 for instance. Using the projection method instead of the mirror descent method could decrease the objective value by merely $(28.92 - 28.925) / 28.925 = 0.170\%$, but the required CPU time is increased by $(6.68 - 0.148) / 0.148 = 4410\%$.  Eventually, we remark that a classic result is that C-0 and M-0 can deliver satisfactory solutions on many networks \citep{marcotte1986network}; here the Braess network is intentionally selected as a counterexample. Our goal is to show that if our models are able to make good approximations on the Braess network, then we can be more confident in applying it to other problems.

\subsection{Experiment III}
\label{app:exp-3}

We then move to the network in the City of Sioux Fall, South Dakota; the network data (topology, travel demand, arc travel time function) are downloaded from the \textit{Transportation Network} GitHub repository\footnote{\url{https://github.com/bstabler/TransportationNetworks/tree/master/SiouxFalls}}. For this network, we have $|\sN| = 24$, $|\sA| = 76$, $|\sK| = 638$. We select 10 arcs for expanding the capacities (see Table \ref{tab:arc}). 
\begin{table}[htbp]
  \centering
  \caption{Selected arcs for adding capacities and their corresponding cost parameters.}
  \vskip 0.1in
 \begin{small}
    \begin{tabular}{ccccccccccc}
    \toprule
    $a$     & 16    & 19    & 17    & 20    & 25    & 26    & 29    & 48    & 39    & 74 \\
    $\evb_a$    & 26.0    & 26.0    & 40.0    & 40.0    & 25.0     & 25.0     & 48.0     & 48.0     & 34.0     & 34.0  \\
    \bottomrule
    \end{tabular}%
  \label{tab:arc}%
 \end{small}
\end{table}%

In the experiment, we set $\gamma = 0.01$.  Based on the results given by Experiment II (see Appendix \ref{app:exp-2}), we use the mirror descent method to formulate the proposed models. %
We compare our approaches with some previous bilevel programming methods studied in the optimization and ML literature. 
\begin{itemize}
    \item ``c0": Algorithm \ref{alg:cournot} with $T = 0$. It solves a 0-step Cournot game. The classic single-level approximation scheme proposed by \citep{tan1979hybrid} essentially solves the same model but uses different algorithms.
    \item ``c1": Algorithm \ref{alg:cournot} with $T = 1$. It solves a 1-step Cournot game. It may be viewed as a straightforward extension of one-stage AD \citep{liu2018darts}. 
    \item ``c10": Algorithm \ref{alg:cournot} with $T = 10$. It solves a 10-step Cournot game.
    \item ``m0": Algorithm \ref{alg:monopoly} with $T = 0$. It solves a 0-step monopoly model. The classic single-level approximation scheme proposed by \citep{dantzig1979formulating} essentially solves the same model. 
    \item ``m45": Algorithm \ref{alg:monopoly} with $T = 45$. It solves a 45-step monopoly model.
    \item ``ad": The Algorithm proposed by \citet{li2022differentiable}.  
    \item ``tad": An extension of truncated AD \citep{shaban2019truncated}. It is similar to ``ad"; the difference is that we only unroll the last 10 iterations of the dynamical process for solving the lower-level problem. 
    \item ``id": The implicit differentiation scheme proposed by \citet{li2020end}. 
    \item ``aid": An extension of approximated ID. It shares the same overall structure as ``id"; the difference is that we only keep the first 10 terms in the Neumann series for matrix inversion. 
    \item ``sid": We extend the two-timescale single-looped method proposed by \citet{hong2020two}.
\end{itemize}
We appropriately design hyperparameters in all of these algorithms and stop running the algorithms based on similar termination conditions. The results are shown in Figure \ref{fig:experiment3-2}, in which we report the total CPU (2.9 GHz Quad-Core Intel Core i7) time, the final optimality gaps, the total iteration number as well as the CPU time per iteration.

\begin{figure}[ht]
\begin{center}
\centerline{\includegraphics[width=0.7\columnwidth]{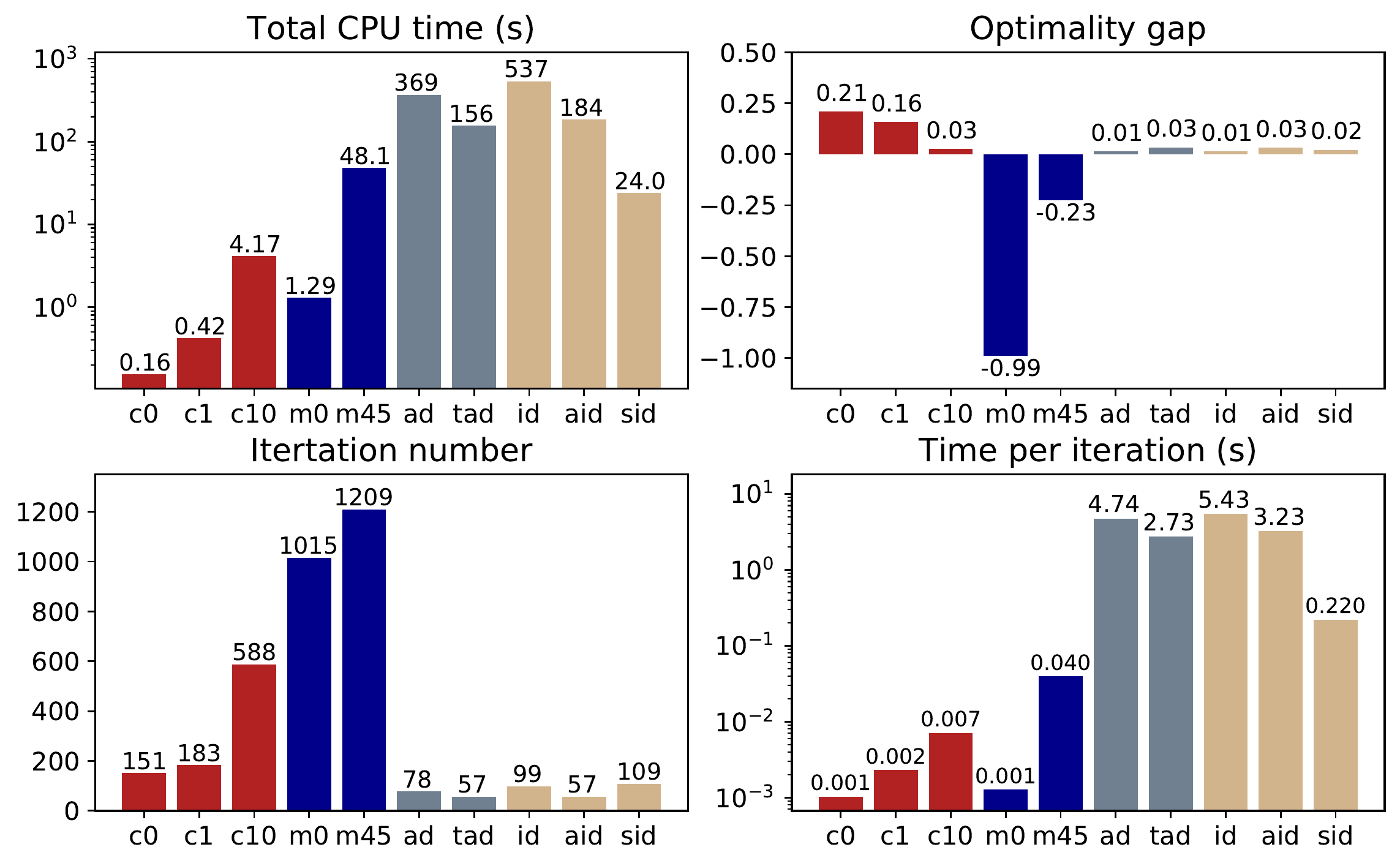}}
\caption{Comparison between our methods and previous methods.}
\label{fig:experiment3-2}
\end{center}
\vskip -0.2in
\end{figure}

\textbf{Observation 1.} We first note that ``c0" and \citet{tan1979hybrid} essentially solve the same models. Compared with the previous approach, our methods can provide more accurate solutions (see ``c1" and ``c10"). Particularly, the optimality gap induced by ``c10" is almost the same as ``tad" and ``aid", the two approximation schemes proposed in the machine learning literature. Meanwhile, it is just slightly larger than the two exact methods, namely, ``id" and ``ad".

\textbf{Observation 2.}  The total CPU time required by ``c10" is significantly lower than ``ad", ``tad", ``id" and ``aid". Specifically, the total number of iterations increases by 5-6 times, but the CPU time per iteration is reduced from 2.5-5.5s to just 0.007s (roughly 400 times). Hence, ``c10" is more efficient.

\textbf{Observation 3.} The single-looped scheme ``sid" is more special. Its CPU time per iteration is obviously much lower than ``ad", ``tad", ``id" and ``aid" because it also bypasses the difficulty in repeatedly solving for the lower-level solution. However, it is still 30 times higher than our approach, mainly because it still builds on implicit differentiation, which involves storing and inverting large matrices. 

\textbf{Observation 4.}  The lower bound provided by ``m45" is not that accurate compared with the upper bound provided by ``c10". Nevertheless, it answers how good the upper bound is. Specifically, ``(upper bound - lower bound) / lower bound" would be a reasonable estimation of the accuracy. If it is smaller than the tolerance, then we are sure that we already find a sufficiently good solution. To the best of our knowledge, no previous method in the machine learning literature can provide such a lower bound for bilevel programs. The classic scheme  \citep{dantzig1979formulating} in the optimization literature could provide such a bound (see ``m0"). However, the gap between the accurate solution and this lower bound is too large. If we use this lower bound to evaluate the upper bound, it may be over too pessimistic.

\bibliographystyle{ims}
\begin{small}
\bibliography{example_paper}
\end{small}

\end{document}